\documentclass[leqno]{amsart}
\usepackage{amsmath,amssymb,amsthm,mathtools}
\usepackage[colorlinks=true,linkcolor=blue,citecolor=blue,urlcolor=blue]{hyperref}
\newtheorem{theorem}{Theorem}[section]
\newtheorem{proposition}[theorem]{Proposition}
\newtheorem{lemma}[theorem]{Lemma}
\newtheorem{corollary}[theorem]{Corollary}
\theoremstyle{definition}
\newtheorem{definition}[theorem]{Definition}

\newtheorem{remark}[theorem]{Remark}
\theoremstyle{plain}
\numberwithin{equation}{section}

\newenvironment{acknowledgments}{\section*{Acknowledgments}}{}

\newcommand{\PP}{\mathbb P}
\newcommand{\EE}{\mathbb E}
\newcommand{\R}{\mathbb R}

\newcommand{\norm}[1]{\left\lVert #1\right\rVert}
\newcommand{\e}{\mathrm e}

\title[Persistence in Probability for Stochastic Delay Equations]{Persistence in Probability for Stochastic Delay Equations with multiplicative noise}
\author{Mark van den Bosch}
\address{Netherlands Organisation for Applied
Scientific Research (TNO), 2597AK The Hague, The Netherlands}
\email{mark-bosch@hotmail.com}

\author{John Ioannis Stavroulakis}
\address{School of Mathematics, Georgia Institute of Technology, Atlanta, GA 30332, United States of America}
\email{jstavroulakis3@gatech.edu}
\date{}
\keywords{stochastic delay differential equation (SDDE); Mackey--Glass equation; bounded in probability; invariant measure; Lyapunov exponent; stochastic integrating factor; fundamental solution}
\subjclass[2020]{34K50, 37L55}

\usepackage{doi}
\usepackage{xcolor} \usepackage[normalem]{ulem} 
\usepackage{todonotes}

\begin{document}
\begin{abstract}
We study the stochastic delay equation
\begin{equation*}
 dX(t)=[-\gamma X(t)+r f(X(t-\tau))]\,dt+\sigma X(t)\,dW(t),
\end{equation*}
where $\gamma,r,\tau\in(0,\infty)$, $\sigma\in[0,\infty)$, and with $f$  either globally Lipschitz or globally bounded and locally Lipschitz, with $f(0)=0$ and $f(x)\ge0$ for all $x>0$.
 Under the condition
\begin{equation*}
 r\liminf_{x\to 0^{+}}\frac{f(x)}{x}>\gamma+\frac{1}{2}\sigma^2,
\end{equation*}
independent of the time delay $\tau,$
we obtain that every solution $X$ with nonnegative initial history and $X(0)>0$ is bounded away from zero in probability. This result then confirms a recent conjecture of van den Bosch, van Gaans, and Verduyn Lunel, and 
yields a non-trivial
invariant measure for, e.g., the  Mackey--Glass  or   Nicholson's blowflies equation with multiplicative noise. To get to this result, we apply a transformation to the solution process, essentially rewriting the equation using a stochastic integrating factor. A comparison with a linear equation gives  contraction estimates for negative conditional moments and hence boundedness away from zero in probability. The same argument yields persistence whenever the linear comparison equation has a positive first Lyapunov exponent.
\end{abstract}
\maketitle

\section{Introduction}
\label{sec:intro}
\noindent Mackey and Glass introduced in 1977 a delay differential equation to model
haematopoietic feedback \cite{MackeyGlass1977}.  It takes the general form 
\begin{equation}\label{eq:1.1}
 x'(t)=-\gamma x(t)+r f(x(t-\tau)),
\end{equation}
with mortality rate $\gamma>0$, recruitment rate $r>0$, a constant time delay $
\tau>0,$ and a delayed feedback
function $f$.  In the haematopoietic interpretation, $x(t)$ represents the
concentration of mature circulating blood cells, while the delay accounts for
the time required for production and maturation before release into the
bloodstream \cite{Glass1979,MackeyGlass1977}.  On the biologically relevant
half-line $x\ge0$, the standard unimodal Mackey--Glass feedback is given by
\begin{equation}
 f(x)=\frac{x}{1+x^n},\qquad n\ge1,\label{eq:1.2}
\end{equation}
particularly satisfying $f(0)=0$ and $f'(0)=1>0$.  Nicholson's blowflies equation can likewise
be modeled by the feedback $f(x)=x\e^{-cx} $ with $ c>0$
\cite{Gurney1980,Nicholson1954}.

The deterministic Mackey--Glass equation \eqref{eq:1.1}--\eqref{eq:1.2} has been studied extensively for   half a century now.  The
numerical study \cite{Farmer1982} quantified high-dimensional chaotic
attractors and their dependence on the delay.  Global and asymptotic dynamics
for scalar delay equations with feedback were developed from several
complementary viewpoints \cite{GarabKrisztin2011,LizRost2009,RostWu2007}.  For nonautonomous
Mackey--Glass equations with variable or multiple delays, positivity,
persistence, oscillation, and stability results were obtained in
\cite{BerezanskyBraverman2006,BerezanskyBraverman2017,BerezanskyBravermanIdels2012}.  Furthermore, periodic dynamics of the classical
Mackey--Glass equation have been studied analytically in \cite{Krisztin2020}
and by computer-assisted methods in
\cite{BarthaKrisztinVigh2021,vanDenBergGrootheddeLessard2022,SzczelinaZgliczynski2018}.  Recent numerical bifurcation analysis exhibits
period-doubling cascades, bistability, and chaotic regimes
\cite{DuruisseauxHumphries2022}.  For broader accounts of the Mackey--Glass
dynamics in the context of delay differential equations, see
\cite{Smith2011,Walther2020}. Chaotic stochastic resonance in the Mackey–Glass equations with additive noise has been studied in \cite{KojimaSato2026}.

Let $W$ denote a standard Brownian motion. A perturbation of the Mackey--Glass equation by multiplicative noise, or more generally
\begin{equation}
\label{eq:stoch-mg-intro}
 dX(t)=[-\gamma X(t)+r f(X(t-\tau))]\,dt+\sigma X(t)\,dW(t),
\end{equation}
was studied by van den Bosch, van Gaans, and Verduyn Lunel in
\cite{vdBMG2026}; for further stochastic delay models and related biological applications, we refer to, e.g.,
\cite{LiuQiuWang2013,LiuJiang2018,MackeyTyranKaminska2024,MaoYuanZou2005,RaoCastilloChavezKang2018,TianBurrageBurrageCarletti2007}.  They proved that solutions to negative feedback system \eqref{eq:stoch-mg-intro} with positive initial histories remain positive, exist globally, and are
bounded above in probability under mild hypotheses.  They further illustrated that|when $f(0)>0$|solutions are bounded away from zero in probability, which in turn is shown to imply the existence of a (physically relevant)
non-trivial invariant measure.  
Finally, the authors  conjectured that|in  case $f(0)=0$ and $f'(0)>0$|solutions are bounded away from zero in probability  if $rf'(0)>\gamma$   and $\sigma$ is sufficiently small; showing this would again confirm the existence of a non-trivial invariant measure \cite[Thm. 1.1]{vdBMG2026}.

In the present paper we prove this conjecture in the following more general
form.  
Throughout this work, we always consider $\gamma>0$, $r>0$, $\tau>0$ and $\sigma\ge0$ to be constants. Regarding the nonlinear function 
$f:\R\to\R$, we either assume
\begin{itemize}
    \item[(A1)] $f$ is globally Lipschitz, or
    \item[(A2)] $f$ is locally Lipschitz and globally bounded, i.e., $\norm{f}_\infty:=\sup_{x\in\R}|f(x)|<\infty$,
\end{itemize}
and work with the condition
\begin{itemize}
\item[(B)] $f$ satisfies $f(0)=0,  f(x)\ge0$ for all $x>0,
  $ and
\begin{equation}
 \kappa:=\liminf_{x\to0^+}\frac{f(x)}x>0.
\end{equation}
\end{itemize}

\noindent\textbf{Definitions.} A stochastic process $X$ is bounded above in probability (uniformly in time) if
\begin{equation}\label{eq:bndinprob}
 \lim_{R\to\infty}\sup_{t\ge0}\PP\{X(t)\ge R\}=0.
\end{equation}
A stochastic process $X$ is bounded away from zero in probability (uniformly in time) if
\begin{equation}\label{eq:bndawayzeroinprob}
 \lim_{\varepsilon\to 0^{+}}
 \sup_{t\ge0}\PP\{X(t)\le\varepsilon\}=0.
\end{equation}
A  nonnegative global solution $X$ to \eqref{eq:stoch-mg-intro} with $X(0)>0$ is called persistent in probability or said to satisfy the permanence property if $X(t)>0$ for all $t>0$ and condition    \eqref{eq:bndawayzeroinprob} holds.

\

The main results of this paper are Theorems \ref{thm:negative-moment} and \ref{thm:positive-moment}, from which we can deduce boundedness away from zero in probability and boundedness from above in probability, respectively. The   explicit criterion \eqref{eq:condition} follows from the finite-time growth condition in Theorem~\ref{thm:negative-moment}.

\begin{theorem}[Persistence in probability]
\label{thm:main}
In addition to assumptions \textnormal{(A1)} or \textnormal{(A2)} together with \textnormal{(B)}, suppose
\begin{equation}\label{eq:condition}
 r\kappa>\gamma+\frac12\sigma^2.
\end{equation}
Then every solution $X$ of stochastic delay  equation \eqref{eq:stoch-mg-intro}, with  nonnegative random
continuous initial history $\Psi$ satisfying $\Psi(0)>0$ almost surely, exists globally and remains positive almost surely, and is 
bounded away from zero in probability uniformly in time.
\end{theorem}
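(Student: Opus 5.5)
The plan follows the route sketched in the abstract: a stochastic integrating factor, a comparison with a linear delay equation, and a contraction estimate for a negative conditional moment; Markov's inequality then gives boundedness away from zero.

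\emph{Global existence and positivity.} Write the solution in variation-of-constants form with the geometric Brownian motion
\begin{equation*}
\Phi(t)=\e^{-(\gamma+\frac12\sigma^2)t+\sigma W(t)}.
\end{equation*}
By It\^o's formula, $Y(t):=X(t)/\Phi(t)$ satisfies the random delay ODE
\begin{equation*}
Y'(t)=r\,\Phi(t)^{-1}f(X(t-\tau)).
\end{equation*}
Under (A1) or (A2), the method of steps on $[k\tau,(k+1)\tau]$ gives a unique global solution; under (A2) the drift is bounded, which excludes blow-up. Since $f\ge0$ on $[0,\infty)$ and $\Psi\ge0$ with $\Psi(0)>0$, step by step $Y$ is nondecreasing and positive, so $X(t)\ge \Psi(0)\Phi(t)>0$ for all $t\ge0$ almost surely.

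\emph{Comparison and linearization near zero.} Fix $\kappa'<\kappa$ such that $r\kappa'>\gamma+\frac12\sigma^2$, and pick $\delta>0$ with $f(x)\ge\kappa' x$ on $(0,\delta]$. Whenever the solution stays in $(0,\delta]$ over a window, it dominates (via the integrating-factor form and monotonicity of the map $Y\mapsto Y+\int r\Phi^{-1}f(\cdot)$) the solution of the linear equation
\begin{equation*}
dZ=[-\gamma Z+r\kappa' Z(t-\tau)]\,dt+\sigma Z\,dW.
\end{equation*}
Its first Lyapunov exponent is positive when $r\kappa'>\gamma+\frac12\sigma^2$. To see this, compare $\log Z$ with $\log$ of the GBM-driven growth $\lambda$, where $\lambda$ is the root of $\lambda=-\gamma+r\kappa'\e^{-\lambda\tau}$ shifted by $-\frac12\sigma^2$. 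Alternatively, show directly that for some $p>0$ and $T$ large, $\EE\bigl[(Z(T)/\norm{Z_0})^{-p}\bigr]\le \frac12$ uniformly over positive normalized initial segments; this is the finite-time growth condition of Theorem~\ref{thm:negative-moment}.

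\emph{Main step, contraction for negative moments.} The key estimate to establish is
\begin{equation*}
\EE\bigl[X(t+T)^{-p}\mid\mathcal F_t\bigr]\le \rho\,\min_{s\in[t-\tau,t]}X(s)^{-p}+C,\qquad \rho<1.
\end{equation*}
For this I would take the negative moment of the segment minimum, or a suitable functional such as $\int$ of a weighted $X^{-p}$ over the window, so that iteration closes. Two cases arise on the window $[t,t+T]$:
\begin{itemize}
\item If the path stays below $\delta$, the linear comparison gives the factor $\rho$.
\item If the path exceeds $\delta$ somewhere, then by positivity of $f$ and the lower bound $X\ge X(s)\Phi(\cdot)/\Phi(s)$ after the exceedance time $s$, one gets $X(t+T)\ge \delta\cdot$(GBM factor). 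This yields the constant $C$, since the GBM has finite negative moments.
\end{itemize}
Iterating along the times $t_0+kT$ gives $\sup_k \EE X(t_0+kT)^{-p}<\infty$. Intermediate times are handled again by $X(t)\ge X(s)\Phi(t)/\Phi(s)$, and the initial segment by conditioning on $\mathcal F_\tau$ with $\min_{[0,\tau]}X>0$ a.s. One truncates on $\{\min X_\tau\ge\eta\}$ if moments of $\Psi$ are not finite, then lets $\eta\to0$. Markov's inequality then yields $\sup_t\PP\{X(t)\le\varepsilon\}\le \varepsilon^p\sup_t\EE X(t)^{-p}\to0$.

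I expect the main obstacle to be the contraction step. It must be uniform in the initial segment, which calls for a continuity and compactness argument over normalized segments of the linear equation. It must also handle mixed paths that wander above $\delta$ and return, where the choice of segment functional matters. My first attempt would use the functional $\sup_{s\in[t-\tau,t]}X(s)^{-p}$ together with the monotone comparison for the integrating-factor form.
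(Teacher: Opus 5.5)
The proposal has two genuine gaps, and the first is where the theorem's hypothesis is actually used.

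\textbf{The hypothesis \eqref{eq:condition} is never used to prove anything.} The rest of your argument needs finite-time growth, $q:=\EE[G_b(T)^{-p}]<1$ for the linear comparison equation with some $b<r\kappa$, $p>0$, $T>0$. You never derive this from $r\kappa>\gamma+\frac12\sigma^2$. Comparing $\log Z$ with a GBM-driven exponential is not an argument. In a delay equation the noise does not factor out, because the coefficient $G_b(t-1)/G_b(t)$ of the delayed term is random and correlated with $W$. Your ``show directly'' is precisely what has to be proved.

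The paper does it as follows (with $\tau=1$). It\^o's formula for $U=G_b^{-p}$ gives $m_p'(t)=pa_pm_p(t)-pb\,\EE[U(t)R(t)]$, where $m_p=\EE U$, $R(t)=G_b(t-1)/G_b(t)$ and $a_p=\gamma+\frac{p+1}{2}\sigma^2$. Since $U(t-1)=U(t)R(t)^{-p}$, H\"older with exponents $(p+1)/p$ and $p+1$ gives $\EE[U(t)R(t)]\ge m_p(t)^{1+1/p}m_p(t-1)^{-1/p}$. Hence $h_p=-\frac1p\log m_p$ satisfies the closed deterministic inequality $h_p'(t)\ge b[h_p(t-1)-h_p(t)]+b-a_p$, with $h_p(s)=-a_ps$ on $[0,1]$. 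An affine delay comparison then yields $h_p(t)\ge-a_p+\frac{b-a_p}{1+b}(t-1)$. Taking $p$ small and $a_p<b<r\kappa$, which is possible exactly under \eqref{eq:condition}, gives $q<1$ for large $T$. Alternatively, Jensen applied to $\EE\log G_b$ gives the $p\to0$ version of the same inequality. Small $p$ then follows from the derivative of $p\mapsto\EE[G_b(T)^{-p}]$ at $0$.

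\textbf{The contraction step fails as written, and the obstacle you anticipate is the wrong one.} With $\min_{[t-\tau,t]}X^{-p}=(\max X_t)^{-p}$ on the right, the estimate is false. Take a positive deterministic history with $\Psi(0)=\epsilon$, a spike of height $\delta/2$ and width $w$, and tiny values elsewhere. Then $X(T)\to0$ pathwise as $\epsilon,w\to0$, so $\EE X(T)^{-p}\to\infty$ by Fatou, while the right-hand side stays bounded. The same histories rule out any bound uniform over sup-normalized segments, so no compactness argument can rescue it. With $\sup_{[t-\tau,t]}X^{-p}$ instead, the one-step bound is true. But then the left side must also be a window supremum for the iteration to close, and you do not supply that.

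The missing idea is to compare with the fundamental solution $G_b^{(t)}$, which has \emph{zero} history. Move both the actual history and the excess $rf(x)-bx$ into a forcing term. Let $\zeta$ be the hitting time of $\delta$ after $t$. On $\{\zeta>T\}$ the forcing $g(v)=rf(X(t+v-\tau))-b\mathbf 1_{\{v\ge\tau\}}X(t+v-\tau)$ is nonnegative: for $v<\tau$ only $f\ge0$ is needed, and for $v\ge\tau$ one uses $rf(x)\ge bx$ on $[0,\delta]$. The variation-of-constants formula of Lemma~\ref{lem:linear-representation} then gives $X(t+u)\ge X(t)G_b^{(t)}(u)$ on $[0,T]$.

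Your second case already handles paths that exceed $\delta$ and come back, since $X/\Phi$ is nondecreasing. Combining the two cases yields $\EE[X(t+T)^{-p}\mid\mathcal F_t]\le qX(t)^{-p}+D$. This is a scalar recursion in $X(t)$ alone that iterates directly, with no segment functional and no uniformity over histories.
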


\begin{remark}
For a constant delay $\tau>0$, the time change \begin{equation}\widetilde X(s):=X(\tau s),\qquad
\widetilde W(s):=\tau^{-1/2}W(\tau s),\end{equation}
solves \eqref{eq:stoch-mg-intro} again with unit delay and
$(\gamma,r,\sigma)$ replaced by $(\tau\gamma,\tau r,\sqrt{\tau}\,\sigma)$.
This illustrates that the condition \eqref{eq:condition} in Theorem~\ref{thm:main} is invariant under rescaling to unit delay and explains why we will set $\tau=1$ in \eqref{eq:stoch-mg-intro} without loss of generality throughout the rest of the paper.
\end{remark}
\begin{remark}\label{remark:positiveLyapunov}
Condition \eqref{eq:condition} is sufficient for persistence in probability|but it is not strict. In Section \ref{sec:numerics} we investigate this numerically and demonstrate that the sharp criterion for persistence,
for $b<r\kappa$ and $\kappa>0,$ corresponds to the linear comparison equation
\begin{equation}\begin{aligned}\label{eq:comparison}
 dG(t)&=[-\gamma G(t)+bG(t-1)]\,dt
          +\sigma G(t)\,dW(t),\\
 G(0)&=1,\quad G(u)=0,\, -1\leq u<0,
 \end{aligned}
\end{equation}
having a positive first Lyapunov exponent $\lambda(b)>0$. In particular, we prove that, if $f(x)\leq \kappa x,$ there is  a unique critical rate $r_c$ satisfying
\begin{equation}
 \lambda(r_{\mathrm c}\kappa)=0,
 \qquad 0<\kappa r_{\mathrm c}\le \gamma+\frac12\sigma^2,
\end{equation}
for which persistence holds for $r>r_{\mathrm c}$ and almost-sure extinction  for
$r<r_{\mathrm c}$. 
\end{remark}
\begin{remark}
    As in the deterministic case \cite{BerezanskyBraverman2006}, it suffices to have $\Psi(0)>0$ to obtain positivity of solutions instead of $\Psi(s)>0$ for all $s\in[-\tau,0]$.  In \cite{vdBMG2026} we assume the latter, as this allows us  to log-transform the equation \eqref{eq:stoch-mg-intro}. Indeed, for $Y(t)=
    \log(X(t))$ we get
    \begin{equation}\label{eq:log}
        dY(t)=\big[-(\gamma+\tfrac12\sigma^2)+re^{-Y(t)}f(e^{Y(t-\tau)})\big]dt+\sigma dW(t),
    \end{equation}
    yielding a problem with additive noise and where 
     initial histories take values in $C([-\tau,0];\mathbb R).$ Intuitively, the sufficient condition \eqref{eq:condition} is not too surprising  in view of the log-transformed equation \eqref{eq:log}.
\end{remark}

In this paper, we rewrite   equation \eqref{eq:stoch-mg-intro} by making use of a stochastic integrating factor, see \eqref{eq:phi}, which is in the spirit of Doss' and Sussmann's reduction of stochastic  differential
equations to differential equations with random coefficients \cite{Doss1977,Sussmann1978}. For comparable transformations that have been used to study  oscillatory behavior and Lyapunov exponents, we refer to Appleby, Buckwar, and Kelly \cite{ApplebyBuckwar2005,ApplebyKelly2004},
and Mohammed and Scheutzow \cite{MohammedScheutzow1996,MohammedScheutzow1997}, respectively.

To obtain boundedness away from zero in probability, we then show that the process $1/X$ is $L^p$-stable: negative moments are essentially used as
Lyapunov-Krasovskii functionals \cite{HaleVerduynLunel1993,Krasovskii1959}. In particular, we bound   $\mathbb E[1/X^{p}]$ using solutions of the  linear comparison equation \eqref{eq:comparison} for appropriate values of $b<r\kappa$. This proves boundedness away from zero in probability. Similar methods go back
to Kolmanovskii, Kushner, Ladde, and Shaikhet
\cite{Kolmanovskii1969,Kushner1968,Ladde1974,Shaikhet1975}; see also Baker and Buckwar \cite{BakerBuckwar2005},
Kolmanovskii and Shaikhet \cite{KolmanovskiiShaikhet1993},
Mackey and Nechaeva \cite{MackeyNechaeva1994,MackeyNechaeva1995},
Mao \cite{Mao1997Razumikhin}, Nechaeva and Khusainov
\cite{NechaevaKhusainov1992Estimates,NechaevaKhusainov1992Perturbations,
NechaevaKhusainov1995}, Tsarkov's monograph \cite{Tsarkov1989}, and the
more recent advancements in \cite{XuLiLongTeng2014}.

\begin{remark}
    This new approach  does not seem to apply to (a log-transformed version of) Wright's equation,
for which the proof of boundedness away from zero in probability is  more technical \cite{vdBWright2026}.  In particular, we cannot achieve a lower comparison such as \eqref{eq:voc-lower}, since this requires the nonlinearity $f$ to be nonnegative.
\end{remark}

For $t\ge0$,  define the segment process by
\begin{equation}
 X_t(\theta):=X(t+\theta),\qquad -\tau\le\theta\le0.
\end{equation}
An invariant probability measure is called \emph{non-trivial} if its pushforward under 
current time evaluation (at $\theta=0$) is supported on $(0,\infty)$; in particular, it
differs from the Dirac mass $\delta_{0}$, representing the zero equilibrium state.

Under  the earlier assumptions, if $f$ is bounded, solutions are bounded above in
probability by Theorem~\ref{thm:positive-moment}, cf. \cite[Prop. 3.5 and Prop. 5.5]{vdBMG2026}.  If one positive
solution is also bounded away from zero in probability, then its
logarithm is bounded in probability. After shifting time by $\tau$,
its initial segment is strictly positive by
Proposition~\ref{prop:wellposed}. The Krylov--Bogoliubov construction
for the logarithmic segment process in
\cite[Thm.~1.1 and its proof]{vdBMG2026} therefore yields an invariant
probability measure on $C([-\tau,0];\R)$. Its push-forward under
$\varphi\mapsto\exp\varphi$ is an invariant probability measure
 on $C([-\tau,0];(0,\infty))$. Combining this with
Theorem~\ref{thm:main} 
gives the
following consequence.

\begin{corollary}[Existence of a non-trivial invariant measure]
\label{cor:invariant-existence}
In addition to the assumptions in Theorem \ref{thm:main}, assume $\|f\|_\infty<\infty.$
Then the segment process induces a non-trivial invariant probability
measure on $C([-\tau,0];(0,\infty))$. 
\end{corollary}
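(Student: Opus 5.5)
The plan is to combine Theorem~\ref{thm:main} with the upper bound in probability, and then run the Krylov--Bogoliubov construction for the logarithmic segment process from \cite[Thm.~1.1]{vdBMG2026}. We work with unit delay $\tau=1$ and fix a deterministic initial history, for instance $\Psi\equiv1$. This is admissible in both Theorem~\ref{thm:main} and Theorem~\ref{thm:positive-moment}, since $\|f\|_\infty<\infty$ together with local Lipschitz continuity places us under (A2).

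\textbf{Step 1 (two-sided tightness of $X(t)$).} Theorem~\ref{thm:main} gives that the solution $X$ is positive and bounded away from zero in probability. Theorem~\ref{thm:positive-moment}, cf.\ \cite[Prop.~3.5, Prop.~5.5]{vdBMG2026}, gives that $X$ is bounded above in probability. Hence $Y(t)=\log X(t)$ satisfies
\[
\lim_{R\to\infty}\sup_{t\ge0}\PP\{|Y(t)|\ge R\}=0 .
\]

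\textbf{Step 2 (tightness of segments).} We need tightness of the laws of $Y_t$ in $C([-1,0];\R)$ for $t\ge 1$, which requires both a pointwise bound and equicontinuity. By Proposition~\ref{prop:wellposed}, $X(t)>0$ for all $t>0$, so the segment $X_1$ is strictly positive and \eqref{eq:log} is available on $[1,\infty)$.

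For the pointwise bound, on $[t-1,t]$ we write
\[
Y(s)=Y(t-1)+\int_{t-1}^s\big[-(\gamma+\tfrac12\sigma^2)+r e^{-Y(u)}f(e^{Y(u-1)})\big]\,du+\sigma\big(W(s)-W(t-1)\big).
\]
The drift term involving $f$ is nonnegative and is at most $r\|f\|_\infty e^{-Y(u)}$. Controlling $e^{-Y(u)}$ uniformly over the window is the delicate point. I would use that $X$ solves a linear equation on $[t-1,t]$ with a nonnegative forcing term, so that the stochastic integrating factor gives the lower bound
\[
X(s)\ge X(t-1)\exp\!\big(-(\gamma+\tfrac12\sigma^2)(s-t+1)+\sigma(W(s)-W(t-1))\big).
\]
This bounds $\inf_{[t-1,t]}X$ below in probability, uniformly in $t$, through the tightness of $X(t-1)$ and of the Brownian increments. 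The same variation-of-constants formula with $f\le\|f\|_\infty$ bounds $\sup_{[t-1,t]}X$ above in probability. Together these give a bound on $\sup_{[t-1,t]}|Y|$ in probability.

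For equicontinuity, the drift of $Y$ is then bounded by a tight random constant on the window, and the Brownian modulus of continuity is tight. Arzel\`a--Ascoli therefore yields tightness of $\{\mathcal L(Y_t)\}_{t\ge1}$. I expect this step to be the main obstacle, but the argument is essentially the one in \cite[Thm.~1.1]{vdBMG2026}, which we may reuse once Step 1 is known.

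\textbf{Step 3 (Krylov--Bogoliubov and push-forward).} The segment process $Y_t$ is a Feller Markov process on $C([-1,0];\R)$. The Feller property follows from the continuous dependence on initial data given by Proposition~\ref{prop:wellposed} and the local Lipschitz coefficients. The time averages $\frac1T\int_1^{T+1}\mathcal L(Y_t)\,dt$ are tight by Step 2, so any weak limit point is an invariant probability measure $\mu$ for $Y_t$. Since $\exp$ is a homeomorphism from $C([-1,0];\R)$ onto $C([-1,0];(0,\infty))$ and $X_t=\exp(Y_t)$, the measure $\mu\circ\exp^{-1}$ is invariant for the segment process $X_t$. Its push-forward under $\varphi\mapsto\varphi(0)$ is supported on $(0,\infty)$, so it is non-trivial. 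Undoing the time rescaling returns us to general $\tau$.
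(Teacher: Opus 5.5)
Your proposal is correct and follows the paper's route. Theorem~\ref{thm:positive-moment} gives boundedness above and Theorem~\ref{thm:main} gives boundedness away from zero, so $\log X$ is bounded in probability; the Krylov--Bogoliubov construction for the logarithmic segment process from \cite{vdBMG2026}, followed by the push-forward under $\varphi\mapsto\exp\varphi$, then yields the non-trivial invariant measure. Your Step~2 derives segment tightness from \eqref{eq:voc-lower}, from \eqref{eq:voc} with $f\le\|f\|_\infty$, and from Arzel\`a--Ascoli, which correctly spells out the part the paper delegates to the proof of \cite[Thm.~1.1]{vdBMG2026}.
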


For further background on the Krylov--Bogoliubov construction, we refer to
\cite{vdBInvariant2026,vdBMG2026}. Lastly, observe that condition \eqref{eq:condition} can be replaced with the positive Lyapunov exponent criterion in Remark \ref{remark:positiveLyapunov}.

This paper is organized as follows. Section~\ref{sec:solutions} recalls the
basic properties of solutions to equation \eqref{eq:stoch-mg-intro} and proves
a stochastic integrating factor representation.
Section~\ref{sec:prelim} develops the comparison to a linear equation, and  gives an affine delay comparison, a
H\"older estimate for conditional negative moments of its fundamental solution,
and a finite-time comparison with the nonlinear equation.
Section~\ref{sec:proofs} subsequently iterates its negative conditional moment bounds to prove
persistence and derives the explicit criterion \eqref{eq:condition}. 
We end with Section~\ref{sec:5}, where we provide some numerics and intuition regarding our main results, also relating it to Lyapunov exponents.

\section{Properties and representation of solutions}
\label{sec:solutions}
\noindent 
Throughout this paper we utilize the standard filtered-space framework of stochastic calculus \cite{Durrett2019,KaratzasShreve1991}.  Let
$(\Omega,\mathcal F,(\mathcal F_t)_{t\ge0},\PP)$ be a filtered probability
space satisfying the usual conditions, carrying a standard
$(\mathcal F_t)$-Brownian motion $W$. 
On this filtered probability space, we study the stochastic delay differential equation
\begin{equation}\begin{aligned}
 dX(t)
 &=\left[-\gamma X(t)+r f(X(t-1))\right]\,dt
   +\sigma X(t)\,dW(t),\qquad t\ge0,
\label{eq:main}\\
 X(s)&=\Psi(s),\qquad -1\le s\le0,
\end{aligned}\end{equation}
where the noise term needs to be interpreted in the It\^o-sense.

 Concerning the qualitative study of this equation, we are  basically only interested  in nonnegative initial histories
$\Psi:\Omega\to C([-\tau,0];[0,\infty))$. Regarding the linear comparison equation in Section~\ref{sec:prelim}, we need the more general assumption that $\Psi:[-1,0]\times\Omega\to\mathbb R$ is $\mathcal B([-1,0])\otimes\mathcal F_0$-measurable and has bounded   sample paths; we call this a Borel history with bounded paths. 


We use the following notion of a global strong solution.
\begin{definition}
\label{def:strong-solution}
A global strong solution of \eqref{eq:main} is a stochastic process
$X:[-1,\infty)\times\Omega\to\R$ that agrees with $\Psi$ on $[-1,0]$,
is continuous and adapted on $[0,\infty)$, i.e., $X(t)$ is $\mathcal F_t$-measurable for all $t\geq 0$, and satisfies almost surely
\begin{equation}
 X(t)=\Psi(0)+\int_0^t[-\gamma X(s)+r f(X(s-1))]\,ds
 +\int_0^t\sigma X(s)\,dW(s),\label{eq:strongsol}
\end{equation}
for every $t\ge0$.
\end{definition}
The representation \eqref{eq:voc} below is the delay analogue of the 
idea by Doss and Sussmann of removing multiplicative noise by a pathwise change of variables
\cite{Doss1977,Sussmann1978}.  Related integrating factor transformations for
stochastic delay differential equations are used by Appleby, Buckwar, and Kelly in \cite{ApplebyBuckwar2005,ApplebyKelly2004} and in the form of integral equations by Mohammed and Scheutzow \cite{MohammedScheutzow1996,MohammedScheutzow1997} .  Observe that our  transformation absorbs both the noise $\sigma X(t)\,dW(t)$ and the term
$-\gamma X(t)\,dt$ into an integrating factor.

Pathwise, for $X$
close to zero and $f(X(t-1))/X(t-1)$ close to the value $\kappa>0$, the new dependent variable $Z$ satisfies  approximately
\begin{equation}
 Z'(t)=c(t)Z(t-1),
\end{equation}
where $c(t)>0$ depends on the noise. This new variable $Z$ satisfies a
classical delay equation, whose dynamics are well understood \cite{BirkhoffKotin1966Asymptotic,BirkhoffKotin1966Positive,Myshkis1970}. As for $X$ large, the dynamics are relatively easy \cite{vdBMG2026}. Lemma~\ref{lem:finite-time-comparison} handles the transition by comparing with the linear equation until $X$ reaches a fixed small level, and using \eqref{eq:voc-lower} afterwards.

Finally, see  \cite[Ch.~5]{Mao2008} and
\cite{Mohammed1984,vonRenesseScheutzow2010,StojkovicGaans2011} for more
general existence and uniqueness results. Global
existence, uniqueness, and positivity also follow from \cite{vdBMG2026}; we give a direct argument because
it simultaneously provides the representation  below.
\begin{proposition}
\label{prop:wellposed}
Either suppose \textnormal{(A1)} or \textnormal{(A2)}.
Then, for every  Borel history with bounded paths, the initial value
problem \eqref{eq:main} has a unique global strong solution. Define
\begin{equation}\label{eq:phi}
 \Phi(t):=\exp\left\{\sigma W(t)-\left(\gamma+\frac12\sigma^2\right)t\right\}.
\end{equation}
Then, almost surely, for every $t\ge0$,
\begin{equation}
\label{eq:voc}
 X(t)=\Phi(t)Z(t),\qquad
 Z(t):=\Psi(0)+r\int_0^t\Phi(s)^{-1}f(X(s-1))\,ds.
\end{equation}
The paths of $Z$ are locally absolutely continuous.

Furthermore, assume $f(x)\geq 0$ for $x\geq 0$ and let $\Psi$ be nonnegative almost surely.  Then $Z$ is nondecreasing almost surely.
Moreover, $X(t)\ge0$ for all $t\ge0$, almost surely, with strict positivity on
$\{\Psi(0)>0\}$,  and 
\begin{equation}
\label{eq:voc-lower}
 X(t)\ge X(s)\frac{\Phi(t)}{\Phi(s)},\qquad 0\le s\le t.
\end{equation}
\end{proposition}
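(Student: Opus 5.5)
The plan is the method of steps combined with the Doss--Sussmann substitution $X=\Phi Z$. This turns \eqref{eq:main} on each interval $[k,k+1]$ into a pathwise integral equation with a \emph{known} integrand, because the delayed argument $X(s-1)$ has already been determined on the previous interval.

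First we check the representation formally. Itô's formula gives $d\Phi=\Phi(-\gamma\,dt+\sigma\,dW)$ and $d(\Phi^{-1})=\Phi^{-1}\big((\gamma+\sigma^2)\,dt-\sigma\,dW\big)$. Hence, for any solution $X$, the product rule yields
\begin{equation*}
d(\Phi^{-1}X)=\Phi^{-1}\big[-\gamma X+rf(X(t-1))\big]dt+\Phi^{-1}\sigma X\,dW+\Phi^{-1}X\big((\gamma+\sigma^2)\,dt-\sigma\,dW\big)-\sigma^2\Phi^{-1}X\,dt.
\end{equation*}
The right-hand side simplifies to $r\Phi^{-1}f(X(t-1))\,dt$. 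So every global strong solution satisfies \eqref{eq:voc}, which gives uniqueness.

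Existence goes in the opposite direction, step by step. On $[0,1]$ we define $Z$ by \eqref{eq:voc} using $\Psi$. The integrand $\Phi(s)^{-1}f(\Psi(s-1))$ is measurable and pathwise bounded: under (A1) we have $|f(x)|\le |f(0)|+L|x|$ and $\Psi$ has bounded paths, while under (A2) $f$ is bounded. The integrand is also progressively measurable because $\Psi$ is $\mathcal F_0$-measurable. Therefore $Z$ is adapted, continuous and locally absolutely continuous. We then set $X=\Phi Z$. This $X$ is continuous, and reversing the Itô computation (product of the semimartingale $\Phi$ with the finite-variation process $Z$) shows that it satisfies \eqref{eq:strongsol} on $[0,1]$. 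We proceed inductively: having continuous $X$ on $[k-1,k]$, we define $Z$ on $[k,k+1]$ by the same formula, and the integrand is now continuous. Pasting the intervals gives a global solution.

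No Lipschitz or growth argument is needed for global existence, since the delay makes each step explicit. Local Lipschitz continuity enters only through the requirement that $f$ be Borel and locally bounded, which continuity already provides. The one genuine subtlety is that the Itô product rule requires $Z$ to be a semimartingale. It is, being adapted, continuous and of finite variation. The almost-sure qualifiers ``for every $t$'' are handled by working on the full-measure set where $W$ is continuous.

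For the sign statements, assume $f\ge0$ on $[0,\infty)$ and $\Psi\ge0$. We argue inductively on $[k,k+1]$: if $X\ge0$ on $[k-1,k]$, then $f(X(s-1))\ge0$, so $Z$ is nondecreasing on $[k,k+1]$. Also $Z(k)\ge\Psi(0)\ge0$ by the previous step, so $Z\ge\Psi(0)\ge0$ there, and $X=\Phi Z\ge0$ because $\Phi>0$. On $\{\Psi(0)>0\}$ this gives $X(t)\ge\Phi(t)\Psi(0)>0$. Finally, \eqref{eq:voc-lower} follows from the monotonicity of $Z$: for $s\le t$,
\begin{equation*}
X(t)=\Phi(t)Z(t)\ge\Phi(t)Z(s)=X(s)\frac{\Phi(t)}{\Phi(s)}.
\end{equation*}
I expect no real obstacle; the only care needed is the measurability and adaptedness of the Borel-history integrand on the first step.
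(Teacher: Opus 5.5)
Your proposal is correct. For existence, uniqueness and adaptedness it takes a more elementary route than the paper.

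\textbf{Where you agree with the paper.} The transformation step (It\^o's formula for $\Phi^{-1}X$ in one direction, the product rule for $\Phi Z$ in the other) matches Parts~I--II of the paper. So does the sign/monotonicity step. Your explicit induction over $[k,k+1]$ is slightly cleaner here, since the paper's one-line claim $Z'\ge0$ tacitly needs $x_Z(t-1)\ge0$ first.

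\textbf{Where you differ.} The paper proves existence and uniqueness by a Banach fixed-point argument for $\mathcal T_a$ on subintervals of length $h=1/(2+D)$. This is where the Lipschitz constant and, under (A2), the a priori bound $R(t)$ enter. It then obtains adaptedness separately, from uniform convergence of the Picard iterates.

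You use the method of steps instead. Because $f$ is evaluated only at the delayed argument, the integrand $\Phi(s)^{-1}f(X(s-1))$ on $[k,k+1]$ is already determined. So $Z$ is an explicit pathwise integral, and adaptedness is inherited from the progressive measurability of the integrand. Uniqueness follows by the same induction from the identity that every strong solution satisfies. That induction is worth writing out rather than compressing into ``which gives uniqueness.''

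\textbf{What each approach buys.} Your remark that no Lipschitz or growth condition is needed is accurate. In fact, since $h<1$, the paper's map $\mathcal T_a$ does not depend on $z$ at all, so its contraction estimate is degenerate. Your argument therefore works for any Borel, locally bounded $f$. The fixed-point/Picard framework would only pay off where the method of steps fails, e.g.\ a nonlinearity that also depends on $X(t)$, or delays not bounded away from zero. For the constant-delay equation \eqref{eq:main}, your route is shorter and more general.
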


\begin{proof}
Under the assumption there exists a unique global strong solution, the assertion follows from parts I and II. In parts III and IV we show, by means of the representation \eqref{eq:voc}, that there exists a unique global strong solution.

\emph{Part I. The pathwise transformation.}
Fix $T>0$ and
put
\begin{equation}
 E_T:=\{z\in C([0,T];\R):z(0)=\Psi(0)\},\qquad
 x_z(u):=
 \begin{cases}
 \Psi(u),&-1\le u<0,\\
 \Phi(u)z(u),&0\le u\le T.
 \end{cases}
\end{equation}
The map $\mathcal T:E_T\to E_T$ given by
\begin{equation}
\label{eq:picard}
 (\mathcal Tz)(t):=\Psi(0)+r\int_0^t\Phi(s)^{-1}f(x_z(s-1))\,ds
\end{equation}
is well-defined,  as the integrand is bounded and Borel measurable on
$[0,T]$. Thus every fixed point $Z=\mathcal TZ$ is absolutely continuous.
It\^o's formula \cite[Thm. 3.3]{KaratzasShreve1991} gives
\begin{equation}
\label{eq:Phi-sde}
 d\Phi(t)=-\gamma\Phi(t)\,dt+\sigma\Phi(t)\,dW(t),\qquad\Phi(0)=1.
\end{equation}
Once adaptedness is verified, the product rule \cite[eq. (3.8)]{KaratzasShreve1991} shows that $X=\Phi Z$
solves \eqref{eq:main}. 
 Conversely, It\^o's formula shows that $\Phi^{-1}X$
 solves \eqref{eq:picard}  for every strong solution $X$.

\emph{Part II. Positivity and the lower comparison.}
Assume  $\Psi\ge0$. 
As $Z$ satisfies
\begin{equation}
 Z'(t)=r\Phi(t)^{-1}f(x_Z(t-1))\ge0\quad\text{for a.e. }t\in[0,T],
\end{equation}
nonnegativity of $f$ on $[0,\infty)$ gives that $Z$ is nonnegative and nondecreasing. Hence
$X(t)=\Phi(t)Z(t)\ge\Phi(t)\Psi(0)\ge0$, showing that $X(t)$ is nonnegative for all $t\geq 0$, and positive on the probability event $\{\Psi(0)>0\}.$ In addition, we have
$\Phi(t)Z(t)\ge\Phi(t)Z(s)$, which results in \eqref{eq:voc-lower}.

\emph{Part III. Pathwise existence and uniqueness.}
For a bounded function $z$ on some interval $[a,b]$, write
$
 \textstyle \|z\|_{[a,b]}:=\sup_{a\le s\le b}|z(s)|. $ Put $P=\|\Phi\|_{[0,T]}$ and $Q=\|\Phi^{-1}\|_{[0,T]}$.
If $f$ is globally Lipschitz, let $L$ be a global Lipschitz constant.
Otherwise, set $F=\norm{f}_\infty$, define
\begin{equation}
 R(t):=|\Psi(0)|+rQFt,
\end{equation}
and restrict $E_T$ to the closed subset \begin{equation} \{z\in C([0,T];\mathbb R):z(0)=\Psi(0),\,|z(t)|\le R(t),\,t\in[0,T]\}.\end{equation}
The map $\mathcal T$ preserves this subset. Take $L$ to be a Lipschitz
constant of $f$ on $[-PR(T),PR(T)]$.
In either case, set $D=rPQL$.

Suppose  the solution has been constructed up to $a\in[0,T]$.
On $[a,b]\subset[0,T]$, use the restriction of the already constructed solution on $[a-1,a]$ as initial condition and set
\begin{equation}
 (\mathcal T_a z)(t):=Z(a)
 +r\int_a^t\Phi(s)^{-1}f(x_z(s-1))\,ds.
\end{equation}
This map preserves the bound $|z(t)|\le R(t)$.
Moreover,
\begin{equation}
 \|\mathcal T_a z-\mathcal T_a w\|_{[a,b]}
 \le rLP\int_a^b\Phi(s)^{-1}\,ds\,
       \|z-w\|_{[a,b]}
 \le D(b-a)\|z-w\|_{[a,b]}.
\end{equation}
Thus it is a contraction whenever $b-a\le h:=1/(2+D)$. 
The same $h$ works throughout $[0,T]$, independently of $a$ and the
previously constructed solution. Banach's fixed-point theorem on a
finite partition of mesh at most $h$ gives existence and uniqueness.
The same estimate proves uniqueness among all continuous fixed points;
in the bounded case every such point satisfies $|Z(t)|\le R(t)$.
Uniqueness gives consistency for different $T$, hence a global solution.

\emph{Part IV. Measurability and adaptedness.}
For completeness, we address the adaptedness of the fixed point $Z=\mathcal TZ. $ Consider the Picard sequence
$Z_0(t)\equiv\Psi(0)$, $Z_{n+1}=\mathcal TZ_n, n\geq 0$. Inductively, assume $Z_n$ is continuous and adapted, as is $Z_0$.
For each $0\le t\le T$, the restriction of $x_{Z_n}$ to $[-1,t]\times\Omega$
is $\mathcal B([-1,t])\otimes\mathcal F_t$-measurable: this follows
from the assumption on $\Psi$ at negative times and from continuity
and adaptedness at nonnegative times. Consequently
\begin{equation}
 H_n(s):=r\Phi(s)^{-1}f(x_{Z_n}(s-1))
\end{equation}
is progressively measurable. Its time integral is continuous and
adapted,  hence so is $Z_{n+1}$. 

We show uniform convergence of the Picard sequence, from which adaptedness of the fixed point follows. 
On the partition $0=t_0<\cdots<t_m=T$, put
$e_{n,j}:=\|Z_n-Z\|_{[0,t_j]}$. Splitting the integral at $t_{j-1}$ in estimate (2.14) gives 
\begin{equation}
 e_{n+1,j}\le DT e_{n,j-1}+\tfrac{D}{2+D} e_{n,j}.
\end{equation}
Since $e_{n,0}=0$ for all $n$, we get $e_{n,1}\to0$ as $n$ to $\infty$. Induction over $j$ similarly gives $e_{n,j}\to0$ as $n\to\infty$.
Thus $Z_n\to Z$ uniformly on $[0,T]$, as $n\to\infty,$ and $Z$ is adapted.
\end{proof}

\section{Preliminary lemmata}
\label{sec:prelim}
\noindent In Section~\ref{sec3.1}, we derive a fundamental solution representation for the linear equation with multiplicative noise. Section~\ref{sec3.2} gives an affine delay comparison estimate, which we exploit in  Section~\ref{sec3.3}; there we establish negative-moment and logarithmic integrability bounds for the fundamental solution.  Section~\ref{sec3.4} transfers these estimates to the nonlinear equation.
\subsection{Fundamental solution representation}\label{sec3.1} Consider, for $b\geq 0,$ the initial value problem
\begin{equation}
    x'(t)=-\gamma x(t)+bx(t-1), \quad x(0)=1, \quad x(u)=0, \,-1\leq u<0.
\end{equation}
The solution to this problem can act as a kernel for solutions to the corresponding linear equation with additive noise \cite{KuchlerMensch1992}.  In the case of multiplicative noise, there is again a fundamental solution representation, yet the kernel is no longer deterministic.

Indeed,  let $G_b$, for any $b\geq 0 $, denote the fundamental solution of the linear equation with multiplicative noise,
\begin{equation}
\begin{aligned}
\label{eq:linear-fundamental}
 dG_b(t)&=[-\gamma G_b(t)+bG_b(t-1)]\,dt
          +\sigma G_b(t)\,dW(t),\\
 G_b(0)&=1,\quad G_b(u)=0,\, -1\leq u<0.
 \end{aligned}
\end{equation}
 In particular,  $G_b$ is continuous on $[0,\infty)$,
$G_b=\Phi$ on $[0,1]$, and $G_b\geq \Phi>0$ for $[1,\infty)$, which follows from the fact that Proposition~\ref{prop:wellposed}, with $rf(x)=bx$, applies; recall that $\Phi$ is defined in equation \eqref{eq:phi}.
For $s\ge0$, define the Brownian motion
$W^{(s)}(t):=W(s+t)-W(s)$, $t\ge0$, adapted to the filtration
$(\mathcal F_{s+t})_{t\ge0}$, and write $\smash{G_b^{(s)}}$ for the fundamental
solution of the same linear equation, now driven by $W^{(s)}$,
\begin{equation}
\label{eq:linear-fundamental2}
\begin{aligned}
 dG_b^{(s)}(t)
 &=[-\gamma G_b^{(s)}(t)+bG_b^{(s)}(t-1)]\,dt
   +\sigma G_b^{(s)}(t)\,dW^{(s)}(t),\\
 G_b^{(s)}(0)
 &=1,\qquad G_b^{(s)}(u)=0,\quad -1\le u<0.
\end{aligned}
\end{equation}
Throughout this work|when comparing $G_b^{(s)}(t)$ from \eqref{eq:linear-fundamental2} with $G(t)$ in \eqref{eq:linear-fundamental}|we compare in a pathwise mannner and thus take the same realization of the Brownian motion $W$; this is crucial for the representation \eqref{eq:linear-history} below.
For $t\ge s$, notice that $G_b^{(s)}(t-s)$ is
$\mathcal F_t$-measurable, and that, pathwise, $\smash{G_b^{(s)}}(t-s)$ is continuous\footnote{While continuity in $t$ is obvious,  continuity in $s$ can be seen through performing the step method.} in both $s$ and $t$, {a fortiori} integrable. 

We are now ready to state the fundamental solution representation lemma. 
A global strong solution
to the inhomogeneous problem \eqref{eq:linear:inhomogeneous}
is defined along similar lines as in Definition~\ref{def:strong-solution}.

\begin{lemma}[Fundamental solution representation]
\label{lem:linear-representation}
Let $\Psi$ be a Borel history with bounded paths,
and suppose $g:[0,\infty)\to\mathbb R$ is progressively measurable with
$\smash{\int_0^{\raisebox{-0.3ex}{$\scriptstyle T$}}}|g(s)|\,ds<\infty$, almost surely, for every $T>0$.
The inhomogeneous initial value problem
\begin{equation}
 \begin{aligned}
 dY(t)&=[-\gamma Y(t)+bY(t-1)+g(t)]\,dt
          +\sigma Y(t)\,dW(t),\qquad t\ge0,\\
 Y(s)&=\Psi(s),\qquad -1\le s\le0,
 \end{aligned}\label{eq:linear:inhomogeneous}
\end{equation}
has a unique global strong solution that can be represented pathwise via
\begin{equation}
\label{eq:linear-history}
\begin{aligned}
 Y(t)={}&G_b(t)\Psi(0)
 +b\int_0^{\min\{t,1\}}G_b^{(s)}(t-s)\Psi(s-1)\,ds\\
 &+\int_0^tG_b^{(s)}(t-s)g(s)\,ds,\qquad t\ge0,
\end{aligned}
\end{equation}
\end{lemma}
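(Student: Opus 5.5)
Existence and uniqueness come first and do not use the formula. I would run the pathwise fixed-point argument of Proposition~\ref{prop:wellposed} with $rf(x)=bx$, which is globally Lipschitz, and with the extra forcing $g$. Put $Y=\Phi V$ with
\begin{equation*}
V(t)=\Psi(0)+\int_0^t\Phi(s)^{-1}\bigl[bY(s-1)+g(s)\bigr]\,ds .
\end{equation*}
Since $\int_0^T|g|<\infty$ almost surely and $\Phi^{-1}$ is continuous, the added term is a fixed, adapted, absolutely continuous function. Parts III and IV therefore go through verbatim: contraction on short intervals, the step method, and Picard iterates for adaptedness. The product rule then shows that $\Phi V$ is the unique strong solution. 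Uniqueness reduces the representation problem to a single task: show that the right-hand side of \eqref{eq:linear-history}, call it $\widetilde Y$, is a continuous adapted process solving \eqref{eq:linear:inhomogeneous}.

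To verify this I would exploit the structure of $G_b^{(s)}$. Because $W^{(s)}$ is an increment of $W$, the equation for $G_b^{(s)}$ can be rewritten in absolute time. Set $H(s,t):=G_b^{(s)}(t-s)$ for $t\ge s$ and $H(s,t):=0$ for $t<s$. Then, for each fixed $s$,
\begin{equation*}
H(s,t)=1+\int_s^t\bigl[-\gamma H(s,u)+bH(s,u-1)\bigr]\,du+\int_s^t\sigma H(s,u)\,dW(u),\qquad t\ge s .
\end{equation*}
Applying the integrating-factor representation of Proposition~\ref{prop:wellposed} started at time $s$ gives the pathwise identity
\begin{equation*}
H(s,t)=\frac{\Phi(t)}{\Phi(s)}\Bigl(1+b\int_s^t\frac{\Phi(s)}{\Phi(u)}H(s,u-1)\,du\Bigr).
\end{equation*}
This holds simultaneously for all $s,t$ once we fix a version continuous in both variables, as the footnote suggests.

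With this identity I would not use a stochastic Fubini theorem. Instead I would work with $\widetilde V:=\Phi^{-1}\widetilde Y$ and show that
\begin{equation*}
\widetilde V(t)=\Psi(0)+\int_0^t\Phi(u)^{-1}\bigl[b\widetilde Y(u-1)+g(u)\bigr]\,du .
\end{equation*}
Here $\widetilde Y(u-1)$ is read as $\Psi(u-1)$ when $u<1$. The identity reduces to three substitutions:
\begin{itemize}
\item insert the identity for $H(s,t)$ into each of the three terms of \eqref{eq:linear-history};
\item use $H(0,\cdot)=G_b$;
\item interchange the deterministic $ds\,du$ integrals by ordinary Fubini.
\end{itemize}
The Fubini step is justified because $H$ is jointly continuous, so locally bounded, and $g$, $\Psi$ are integrable.

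The bookkeeping in that computation runs as follows. The term $\Phi(t)\Phi(s)^{-1}$ coming from $H(s,t)$ contributes $\int_0^t\Phi(u)^{-1}g(u)\,du$ and, for $u<1$, $b\int_0^{t\wedge1}\Phi(u)^{-1}\Psi(u-1)\,du$. The remaining terms regroup as $b\int_0^t\Phi(u)^{-1}\widetilde Y(u-1)\mathbf 1_{u\ge1}\,du$. This uses the fact that $\widetilde Y(u-1)$ is exactly the representation \eqref{eq:linear-history} evaluated at $u-1$, thanks to the convention $H(s,u-1)=0$ for $u-1<s$.

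The main technical obstacles are these two points, both handled through pathwise continuity:
\begin{itemize}
\item \textbf{A good joint version of $H$.} We need a version of $(s,t)\mapsto H(s,t)$ that is jointly continuous off the diagonal and jointly measurable.
\item \textbf{Adaptedness of $\widetilde Y$.} $H(s,t)$ is $\mathcal F_t$-measurable for $s\le t$, and $g$ is progressive, so the integrals in \eqref{eq:linear-history} are $\mathcal F_t$-measurable.
\end{itemize}
For the first point I would construct $H$ by the step method on $[s+k,s+k+1]$, which gives explicit continuous formulas in $(s,t)$. The convolution indicator at the jump $u=s+1$ only produces a jump in the integrand, which is harmless under integration.

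Once $\widetilde V$ satisfies the fixed-point equation, the product rule gives that $\widetilde Y$ solves \eqref{eq:linear:inhomogeneous}, and uniqueness yields $Y=\widetilde Y$.
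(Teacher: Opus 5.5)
Your proposal is correct and follows essentially the paper's own argument. Both proofs use the integrating-factor identity for $G_b^{(s)}(t-s)/\Phi(t)$, absorb the history into the forcing, apply a pathwise Fubini interchange, recognize the inner bracket as $Y(v-1)$, and conclude with the product rule and uniqueness. The only minor difference is how uniqueness is obtained: you rerun the fixed-point argument of Proposition~\ref{prop:wellposed} with the forcing $g$, whereas the paper observes that the difference of two solutions solves the homogeneous problem with zero history.
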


\begin{proof}
Recall $\Phi$ defined in \eqref{eq:phi}. For $s,u\ge0$, let 
\begin{equation}
 \Phi^{(s)}(u):=\frac{\Phi(s+u)}{\Phi(s)}.
\end{equation}
Proposition~\ref{prop:wellposed}, applied to the Brownian motion
$W^{(s)}(t)$, gives
\begin{equation}
 \frac{G_b^{(s)}(u)}{\Phi^{(s)}(u)}
 =1+b\int_0^u
       \frac{G_b^{(s)}(w-1)}{\Phi^{(s)}(w)}\,dw.
\end{equation}
Setting $u=t-s$, dividing by $\Phi(s)$, and substituting $v=s+w$
yields, 
\begin{equation}
 \frac{G_b^{(s)}(t-s)}{\Phi(t)}
 =\frac1{\Phi(s)}
 +b\int_s^t
       \frac{G_b^{(s)}(v-s-1)}{\Phi(v)}\,dv, \label{8.08}
\end{equation}
for $t\ge s$.

Define the stochastic process $Y$ on $[0,\infty)$ by \eqref{eq:linear-history}, and set
$Y=\Psi$ on $[-1,0]$. Absorb the prescribed history into the forcing by
putting
\begin{equation}
 F(s):=
 \begin{cases}
 b\Psi(s-1)+g(s),&0\le s<1,\\
 g(s),&s\ge1.
 \end{cases}
\end{equation}
Then \eqref{eq:linear-history} reads
\begin{equation}
 Y(t)=\Psi(0)G_b(t)
      +\int_0^tG_b^{(s)}(t-s)F(s)\,ds.
\end{equation}
Dividing by $\Phi(t)$ and using \eqref{8.08} gives
\begin{equation}
\begin{aligned}
 \frac{Y(t)}{\Phi(t)}
 ={}&\Psi(0)+\int_0^t\frac{F(s)}{\Phi(s)}\,ds
 +b\Psi(0)\int_0^t\frac{G_b(v-1)}{\Phi(v)}\,dv\\
 &+b\int_0^t\int_s^t
       \frac{G_b^{(s)}(v-s-1)F(s)}{\Phi(v)}\,dv\,ds.
\end{aligned}
\end{equation}
On each finite time interval, the kernel and $\Phi^{-1}$ are bounded
pathwise and $F$ is integrable. Fubini therefore gives
\begin{equation}
\begin{aligned}
 \frac{Y(t)}{\Phi(t)}
 ={}&\Psi(0)+\int_0^t\frac{F(v)}{\Phi(v)}\,dv\\
 &+b\int_0^t\frac1{\Phi(v)}
 \left[
   \Psi(0)G_b(v-1)
   +\int_0^vG_b^{(s)}(v-s-1)F(s)\,ds
 \right]dv.
\end{aligned}
\end{equation}
Since $G_b^{(s)}$ vanishes at negative times, the part inside brackets is zero
for $v<1$. For $v\ge1$, its inner integral stops at $s=v-1$,
implying that the part inside brackets equals $Y(v-1)$. Combining this with the definition
of $F$ proves
\begin{equation}
 \frac{Y(t)}{\Phi(t)}
 =\Psi(0)+\int_0^t
       \frac{bY(v-1)+g(v)}{\Phi(v)}\,dv.
\end{equation}
As $Y$ defines a continuous adapted process on
$[0,\infty)$, invoking It\^o's product rule   shows that $Y$ satisfies the
stated stochastic delay equation \eqref{eq:linear:inhomogeneous}. 

Finally, Proposition~\ref{prop:wellposed} implies uniqueness of solutions to the homogeneous problem, and therefore uniqueness to the inhomogeneous problem, because taking the difference of any two possible
solutions  satisfies the homogeneous linear equation with zero
history.  
\end{proof}

\subsection{An affine estimate}\label{sec3.2} The next preliminary lemma will be exploited in Lemma~\ref{lem:holder-negative-moment}.

\begin{lemma}
\label{lem:affine-delay-comparison}
Let $a,b\ge0$ and $d>0$. Suppose
$h:[0,\infty)\to\mathbb R$ is locally absolutely continuous that satisfies the inequality
\begin{equation}
 h'(t)\ge b[h(t-1)-h(t)]+d
 \quad\text{for a.e. }t\ge1,
 \qquad h(s)\ge-as\quad(0\le s\le1).
\end{equation}
Then
\begin{equation}
\label{eq:affine-delay-bound}
 h(t)\ge-a+\frac{d}{1+b}(t-1),\qquad t\ge1.
\end{equation}
\end{lemma}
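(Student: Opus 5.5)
Set $\ell(t):=-a+\frac{d}{1+b}(t-1)$ and $u:=h-\ell$. The goal is to show $u\ge0$ on $[1,\infty)$, by a comparison argument based on a first crossing time.

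\emph{Step 1: the affine function is a subsolution with slack.} For $t\ge2$ we have $\ell(t-1)-\ell(t)=-\frac{d}{1+b}$. Hence
\begin{equation}
 b[\ell(t-1)-\ell(t)]+d=\frac{d}{1+b}=\ell'(t).
\end{equation}
Subtracting this from the hypothesis gives
\begin{equation}
 u'(t)\ge b[u(t-1)-u(t)]\quad\text{for a.e. }t\ge2.
\end{equation}
On $[1,2]$ the delayed argument lies in $[0,1]$, so we may use the initial bound instead:
\begin{equation}
 h'(t)\ge b[-a(t-1)-h(t)]+d.
\end{equation}
Moreover, at $t=1$ we have $h(1)\ge -a=\ell(1)$, so $u(1)\ge0$.

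\emph{Step 2: the interval $[1,2]$.} For $s\in[0,1]$ we have $\ell(s+1)=-a+\frac{ds}{1+b}$. Compare this with the delayed term $-as$. The required estimate on $[1,2]$ is
\begin{equation}
 \ell'(t)\le b[-a(t-1)-\ell(t)]+d .
\end{equation}
Writing $s=t-1$, this reads
\begin{equation}
 \frac{d}{1+b}\le b\Bigl[a(1-s)-\frac{ds}{1+b}\Bigr]+d .
\end{equation}
This holds, since the right-hand side is at least $d-\frac{bd}{1+b}=\frac{d}{1+b}$ because $a(1-s)\ge0$. Thus $u'(t)\ge -bu(t)$ a.e. on $[1,2]$ with $u(1)\ge0$. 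Gronwall, applied to $(e^{bt}u)'\ge0$, gives $u\ge0$ on $[1,2]$.

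\emph{Step 3: propagation for $t\ge2$.} Let $v:=\min(u,0)$ and suppose $u$ becomes negative somewhere. Set $t_*:=\inf\{t\ge 2: u(t)<0\}$, so $u\ge0$ on $[1,t_*]$. On any interval right after $t_*$ of length less than $1$, we have $u(t-1)\ge0$. Hence, a.e. on $\{u<0\}$ there,
\begin{equation}
 u'\ge -bu>0 .
\end{equation}
Then $(e^{bt}u)'\ge0$ on $[t_*,t_*+\delta]$ with $u(t_*)=0$, which forces $u\ge0$ there. This contradicts the definition of $t_*$.

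The main obstacle is rigor in Step 3 with only absolute continuity. Instead of arguing pointwise, I would integrate the inequality $(e^{bt}u)'\ge be^{bt}u(t-1)\ge0$ on $[t_*,t_*+1]$ directly. This shows $e^{bt}u$ is nondecreasing there, hence $u\ge0$, and the argument extends by induction on intervals of unit length.
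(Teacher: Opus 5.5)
Your proof is correct and follows the paper's route: the same affine comparison function $\ell$, the integrating factor $e^{bt}$ applied to $u=h-\ell$, and the method of steps on unit intervals. Your separate treatment of $[1,2]$ is not needed, since $\ell(s)\le -a\le -as\le h(s)$ on $[0,1]$ already gives $u\ge 0$ there, so $(e^{bt}u)'\ge b e^{bt}u(t-1)$ holds for a.e.\ $t\ge 1$ and the induction can start at $t=1$, which is exactly what the paper does.
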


\begin{proof}
Set  $v(t)=-a+d/(1+b)(t-1)$ and $z(t)=\e^{bt}[h(t)-v(t)]$. Then,
\begin{equation}
 z'(t)\ge b\e^b z(t-1)\quad\text{for }t\ge1,
\end{equation}
with $z\ge0$ on $[0,1]$. 
We have $z\ge0$ everywhere and 
 $h\ge v$. On the inequality that $z$ satisfies, cf. \cite{BirkhoffKotin1966Asymptotic,BirkhoffKotin1966Positive,Myshkis1970}
\end{proof}

\subsection{Negative moments of the fundamental solution}\label{sec3.3}

The following lemma uses linear moment comparisons in the spirit of
Ladde's comparison method \cite[Thm.~3.2]{Ladde1974} and the
first-moment analysis of Mackey and Nechaeva
\cite[Sec.~III.B.1]{MackeyNechaeva1995}. For related moment estimates
and stability criteria, see
\cite{BakerBuckwar2005,Kolmanovskii1969,KolmanovskiiShaikhet1993,
Kushner1968,MackeyNechaeva1994,Mao1997Razumikhin,
NechaevaKhusainov1992Estimates,NechaevaKhusainov1992Perturbations,
NechaevaKhusainov1995,Shaikhet1975,Tsarkov1989,XuLiLongTeng2014}.

\begin{lemma}
\label{lem:holder-negative-moment}
Let $b\ge0$ and $p>0$, and set
\begin{equation}
 c:=\gamma+\frac12\sigma^2,\qquad
 a_p:=c+\frac{p\sigma^2}{2},\qquad
 m_p(t):=\EE[G_b(t)^{-p}],\qquad
 h_p(t):=-\frac1p\log m_p(t).
\end{equation}
The functions $m_p$ and $h_p$ are finite and locally absolutely
continuous on $[0,\infty)$, with $h_p(s)=-a_ps$ for $0\le s\le1$.
For a.e. $t\ge1$,
\begin{equation}
\label{eq:holder-moment-inequality}
 m_p'(t)\le pa_pm_p(t)
       -pb\,m_p(t)^{1+1/p}m_p(t-1)^{-1/p},
\end{equation}
and consequently
\begin{equation}
\label{eq:log-moment-delay}
 h_p'(t)\ge-a_p+b\e^{h_p(t-1)-h_p(t)}
 \ge b[h_p(t-1)-h_p(t)]+b-a_p.
\end{equation}
If $b>a_p$, then
\begin{equation}
\label{eq:holder-negative-bound}
 \EE[G_b(t)^{-p}]
 \le\exp\left\{pa_p-\frac{p(b-a_p)}{1+b}(t-1)\right\},
 \qquad t\ge1.
\end{equation}
\end{lemma}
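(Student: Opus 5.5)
We apply It\^o's formula to $G_b(t)^{-p}$, take expectations, and control the delay term with H\"older's inequality. On $[0,1]$ we have $G_b=\Phi$, so $G_b(s)^{-p}=\exp\{-p\sigma W(s)+pcs\}$ and
\[
m_p(s)=\exp\{pcs+\tfrac12p^2\sigma^2 s\}=\e^{pa_ps},
\]
which gives $h_p(s)=-a_ps$. Finiteness of $m_p(t)$ for all $t$ follows from $G_b\ge\Phi$ (Proposition~\ref{prop:wellposed} with $rf(x)=bx$), since then $G_b(t)^{-p}\le\Phi(t)^{-p}$, whose expectation is $\e^{pa_pt}$.

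For $t\ge1$, It\^o's formula applied to $u\mapsto u^{-p}$, which is legitimate because $G_b>0$, gives
\[
d\,G_b^{-p}=\Big[-pG_b^{-p-1}\big(-\gamma G_b+bG_b(t-1)\big)+\tfrac12p(p+1)\sigma^2G_b^{-p}\Big]dt-p\sigma G_b^{-p}\,dW .
\]
The drift without delay is $p(\gamma+\tfrac12(p+1)\sigma^2)G_b^{-p}=pa_pG_b^{-p}$. To take expectations we must show the stochastic integral is a true martingale. We use $G_b^{-p}\le\Phi^{-p}$, which has all moments locally bounded, so $\EE\int_0^T G_b^{-2p}\,ds<\infty$. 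The delayed term $G_b^{-p-1}G_b(t-1)$ is nonnegative, so Tonelli applies to it. This yields the integral identity
\[
m_p(t)=m_p(1)+\int_1^t\Big(pa_pm_p(s)-pb\,\EE\big[G_b(s)^{-p-1}G_b(s-1)\big]\Big)ds .
\]
Hence $m_p$ is locally absolutely continuous, and so is $h_p$ because $m_p>0$ and is continuous.

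The key step is the lower bound
\[
\EE[G_b(s)^{-p-1}G_b(s-1)]\ge m_p(s)^{1+1/p}m_p(s-1)^{-1/p}.
\]
We apply H\"older with exponents $\frac{p+1}{p}$ and $p+1$ to
\[
G_b(s)^{-p}=\big(G_b(s)^{-p-1}G_b(s-1)\big)^{p/(p+1)}\,G_b(s-1)^{-p/(p+1)} .
\]
This gives
\[
m_p(s)\le \EE\big[G_b(s)^{-p-1}G_b(s-1)\big]^{p/(p+1)}\,m_p(s-1)^{1/(p+1)} .
\]
Rearranging yields the lower bound and hence \eqref{eq:holder-moment-inequality}. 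Since $h_p'=-m_p'/(pm_p)$, we obtain
\[
h_p'\ge -a_p+b\,m_p(t)^{1/p}m_p(t-1)^{-1/p}=-a_p+b\,\e^{h_p(t-1)-h_p(t)} .
\]
The elementary inequality $\e^x\ge1+x$ then gives \eqref{eq:log-moment-delay}.

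Finally, for $b>a_p$ we apply Lemma~\ref{lem:affine-delay-comparison} with $d=b-a_p>0$ and constant $a=a_p$; the hypothesis $h_p(s)\ge-a_ps$ holds on $[0,1]$ with equality. This gives $h_p(t)\ge -a_p+\frac{b-a_p}{1+b}(t-1)$, and exponentiating $m_p=\e^{-ph_p}$ yields \eqref{eq:holder-negative-bound}. The main technical obstacle is justifying the expectation step: the martingale property, and the finiteness of $\EE[G_b^{-p-1}G_b(s-1)]$ so that the integral identity is meaningful. Finiteness follows because the identity bounds $\int_1^t pb\,\EE[\cdots]\,ds\le m_p(1)+\int_1^t pa_pm_p$. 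If needed, we would first localize with stopping times and pass to the limit by monotone convergence on the nonnegative term.
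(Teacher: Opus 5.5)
Your proposal is correct and follows the paper's proof essentially step by step: It\^o's formula for $G_b^{-p}$ with drift $pa_pG_b^{-p}-pb\,G_b^{-p-1}G_b(t-1)$, the same H\"older factorization with exponents $(p+1)/p$ and $p+1$, the bound $\e^x\ge1+x$, the Gaussian formula on $[0,1]$, and Lemma~\ref{lem:affine-delay-comparison} with $a=a_p$, $d=b-a_p$. The only minor difference is in how you get integrability of $\EE[G_b(s)^{-p-1}G_b(s-1)]$: you extract it a posteriori from the expected pathwise identity, using that the delay term is nonnegative, whereas the paper dominates it directly by $\Phi(s)^{-p}\Phi(s-1)/\Phi(s)$ via monotonicity of $G_b/\Phi$. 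Both justifications work.
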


\begin{proof}
Proposition~\ref{prop:wellposed} applied to \eqref{eq:linear-fundamental}
gives
$G_b\ge\Phi>0$, $G_b=\Phi$ on $[0,1]$, and $G_b/\Phi$ is
nondecreasing. Hence, for $t\ge1$,
\begin{equation}
 U(t):=G_b(t)^{-p}\le\Phi(t)^{-p},\qquad
 0<R(t):=\frac{G_b(t-1)}{G_b(t)}
 \le\frac{\Phi(t-1)}{\Phi(t)}.
\end{equation}
Setting $B_T=\sup_{0\le s\le T}|W(s)|$, the reflection principle gives $\EE\e^{aB_T}<\infty$ for every $a>0$
\cite[Sec.~2.6.A]{KaratzasShreve1991}.
On each finite interval, exponentials of $B_T$ dominate both $U^2$ and $UR$, so their expectations are finite. It\^o's formula therefore gives
\begin{equation}
 dU(t)=\bigl[pa_pU(t)-pbU(t)R(t)\bigr]dt
           -p\sigma U(t)\,dW(t),
\end{equation}
with a square-integrable stochastic integral on bounded intervals.
Taking expectations shows that $m_p$ is locally absolutely continuous
and, for a.e. $t\ge1$,
\begin{equation}\label{eq:323}
 m_p'(t)=pa_pm_p(t)-pb\,\EE[U(t)R(t)].
\end{equation}
Since $U(t-1)=U(t)R(t)^{-p}$, H\"older's inequality|with conjugate
exponents $(p+1)/p$ and $p+1$|gives
\begin{equation}
 \begin{aligned}
 m_p(t)
 &=\EE\left[(U(t)R(t))^{p/(p+1)}U(t-1)^{1/(p+1)}\right]\\&\qquad \qquad \le\bigl(\EE[U(t)R(t)]\bigr)^{p/(p+1)}
       m_p(t-1)^{1/(p+1)}.
 \end{aligned}
\end{equation}
Rearranging and substituting into the moment identity \eqref{eq:323} proves 
\eqref{eq:holder-moment-inequality}. Because
$m_p$ is continuous and strictly positive, $h_p$ is locally absolutely
continuous. Dividing \eqref{eq:holder-moment-inequality} on both sides by $-pm_p(t)$
and using $\e^x\ge1+x$ proves \eqref{eq:log-moment-delay}.
The Gaussian moment formula on $[0,1]$ gives
$m_p(s)=\e^{pa_ps}$, so $h_p(s)=-a_ps$. If $b>a_p$,
Lemma~\ref{lem:affine-delay-comparison}, with $a=a_p$ and $d=b-a_p$,
yields
\begin{equation}
 h_p(t)\ge-a_p+\frac{b-a_p}{1+b}(t-1),\qquad t\ge1.
\end{equation}
Exponentiating proves \eqref{eq:holder-negative-bound}.
\end{proof}

\begin{lemma}[Conditional negative-moment estimate]
\label{lem:conditional-negative-moment}
Let $b\ge0$, $p>0$, and $T\ge0$, and set
\begin{equation}
 c:=\gamma+\frac12\sigma^2,\qquad
 a_p:=c+\frac{p\sigma^2}{2},\qquad
 q:=\EE[G_b(T)^{-p}].
\end{equation}
Then, for each fixed $t\ge0$, almost surely 
\begin{equation}
\label{eq:conditional-negative-moment}
 \EE\!\left[G_b(t+T)^{-p}\mid\mathcal F_t\right]
 \le
 qG_b(t)^{-p}. 
\end{equation}
 In particular, if $b>a_p$, then for $T\ge1$, almost surely
\begin{equation}
\label{eq:conditional-holder-negative-bound}
 \EE\!\left[G_b(t+T)^{-p}\mid\mathcal F_t\right]
 \le
 G_b(t)^{-p}
 \exp\left\{
 pa_p-\frac{p(b-a_p)}{1+b}(T-1)
 \right\}.
\end{equation}
\end{lemma}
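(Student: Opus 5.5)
The plan is to use the linearity of the equation for $G_b$ together with the restart structure of Lemma~\ref{lem:linear-representation}. Fix $t\ge0$ and consider the process $u\mapsto G_b(t+u)$, $u\ge0$. It solves the linear equation \eqref{eq:linear-fundamental} driven by the Brownian motion $W^{(t)}$, with the $\mathcal F_t$-measurable Borel history $\Psi(\theta):=G_b(t+\theta)$, $\theta\in[-1,0]$, which is nonnegative with bounded paths. Applying Lemma~\ref{lem:linear-representation} (with $g\equiv0$, relative to the shifted filtration $(\mathcal F_{t+u})_{u\ge0}$) gives pathwise
\begin{equation*}
 G_b(t+T)=G_b^{(t)}(T)\,G_b(t)+b\int_0^{\min\{T,1\}}G_b^{(t+s)}(T-s)\,G_b(t+s-1)\,ds .
\end{equation*}
Here the restarted kernels appear as solutions driven by increments of $W$ after time $t+s$.

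The key observation is that every term in this representation is nonnegative. Indeed, by Proposition~\ref{prop:wellposed} each kernel satisfies $G_b^{(\cdot)}\ge\Phi^{(\cdot)}>0$, and $G_b\ge0$ on $[-1,\infty)$. Hence
\begin{equation*}
 G_b(t+T)\ge G_b^{(t)}(T)\,G_b(t),\qquad\text{so}\qquad G_b(t+T)^{-p}\le G_b^{(t)}(T)^{-p}\,G_b(t)^{-p}.
\end{equation*}
Alternatively, one can avoid the representation and argue by comparison. The difference $D(u):=G_b(t+u)-G_b(t)G_b^{(t)}(u)$ solves the same linear equation driven by $W^{(t)}$, with history $G_b(t+\theta)\ge0$ for $\theta<0$ and $D(0)=0$. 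The nonnegativity part of Proposition~\ref{prop:wellposed}, applied with $rf(x)=bx$, then gives $D\ge0$.

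Next, take conditional expectations. The factor $G_b(t)^{-p}$ is $\mathcal F_t$-measurable. The variable $G_b^{(t)}(T)$ is a measurable functional of the increments $(W(t+u)-W(t))_{0\le u\le T}$, since it is obtained from the pathwise Picard/step construction in Proposition~\ref{prop:wellposed}. These increments are independent of $\mathcal F_t$ and have the same law as $W$ on $[0,T]$. Therefore
\begin{equation*}
 \EE\bigl[G_b^{(t)}(T)^{-p}\mid\mathcal F_t\bigr]=\EE\bigl[G_b(T)^{-p}\bigr]=q,
\end{equation*}
which yields \eqref{eq:conditional-negative-moment}. Finiteness of $q$ comes from Lemma~\ref{lem:holder-negative-moment}. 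The bound \eqref{eq:conditional-holder-negative-bound} then follows at once by inserting \eqref{eq:holder-negative-bound} for $q$ when $b>a_p$ and $T\ge1$.

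I expect the main obstacle to be the measure-theoretic step: justifying that $G_b^{(t)}(T)$ is a measurable functional of the shifted Brownian path with the same law as $G_b(T)$, so that independence can be used. I would handle this by noting that the construction in Part III/IV of Proposition~\ref{prop:wellposed} produces $G_b$ as a uniform limit of Picard iterates. Each iterate is a fixed Borel functional $\mathcal G$ of the driving path, and so the same functional evaluated at $W^{(t)}$ gives $G_b^{(t)}$. The identity in law and the independence then follow directly.
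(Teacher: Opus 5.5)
Your proposal is correct and follows essentially the same route as the paper. Both restart the linear equation at time $t$ via Lemma~\ref{lem:linear-representation}, discard the nonnegative history term to get $G_b(t+T)\ge G_b(t)G_b^{(t)}(T)$, and condition on $\mathcal F_t$ using that $G_b^{(t)}(T)$ is independent of $\mathcal F_t$ and has the same law as $G_b(T)$; your explicit measurability argument (via the Picard iterates as Borel functionals of $W^{(t)}$) and the alternative comparison showing $G_b(t+\cdot)-G_b(t)G_b^{(t)}\ge0$ are valid elaborations of steps the paper states only briefly.
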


\begin{proof}
We apply Lemma \ref{lem:linear-representation} with $g=0$ and $Y(u)=G_b(t+u).$ Nonnegativity of the history $\Psi$ term gives
\begin{equation}
 G_b(t+u)\ge G_b(t)G_b^{(t)}(u),\qquad u\ge0.
\end{equation}
Consequently,
\begin{equation}
 G_b(t+T)^{-p}
 \le
 G_b(t)^{-p}\bigl(G_b^{(t)}(T)\bigr)^{-p}.
\end{equation}
Since $G_b^{(t)}$ depends only on the Brownian increments after time $t$,
it is independent of $\mathcal F_t$ and has the same law as $G_b$.
Therefore
\begin{equation}
\begin{aligned}
 \EE[G_b(t+T)^{-p}\mid\mathcal F_t]
 &\le
 G_b(t)^{-p}
 \EE[
 (G_b^{(t)}(T))^{-p}
 \mid\mathcal F_t
 ]
 =G_b(t)^{-p}\EE[
 (G_b^{(t)}(T))^{-p}
 ],
\end{aligned}
\end{equation}
which proves \eqref{eq:conditional-negative-moment}. The bound
\eqref{eq:conditional-holder-negative-bound} follows from
Lemma~\ref{lem:holder-negative-moment}.
\end{proof}

\begin{lemma}[Finite-time moments and logarithmic integrability]
\label{lem:finite-time-log-moments}
Let $b\ge0$ and $T>0$. For every $r>0$,
\begin{equation}
 \EE\!\left[G_b(T)^r\right],\qquad  \EE[G_b(T)^{-r}]<\infty,
 \qquad
 \EE\sup_{0\le s\le T}
       \left(\frac{\Phi(s)}{\Phi(T)}\right)^r<\infty.
\end{equation}
In particular, $\log G_b(T)\in L^1(\mathbb P)$,
and
\begin{equation}
 \left.\frac{d}{dp}\EE[G_b(T)^{-p}]\right|_{p=0+}
 =-\EE[\log G_b(T)].
\end{equation}
Consequently, if $\EE[\log G_b(T)]>0$, then
$\EE[G_b(T)^{-p}]<1$ for all sufficiently small $p>0$.
\end{lemma}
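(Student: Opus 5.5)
The plan is to sandwich $G_b(T)$ between explicit functionals of the Brownian path on $[0,T]$. All exponential moments of $\sup_{[0,T]}|W|$ are finite, so every bound needed will follow. The derivative statement then comes from differentiating under the expectation, justified by dominated convergence.

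\emph{Lower bound.} Proposition~\ref{prop:wellposed} gives $G_b\ge\Phi>0$. Hence
\[
G_b(T)^{-r}\le\Phi(T)^{-r}=\exp\{-r\sigma W(T)+r(\gamma+\tfrac12\sigma^2)T\},
\]
which has finite expectation by the Gaussian moment formula.

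\emph{Upper bound.} From \eqref{eq:voc} with $rf(x)=bx$ we have $G_b(t)=\Phi(t)Z(t)$ and
\[
Z(t)=1+b\int_0^t\Phi(s)^{-1}G_b(s-1)\,ds.
\]
Set $M=\sup_{[0,T]}\Phi$ and $N=\sup_{[0,T]}\Phi^{-1}$. Then $Z(t)\le 1+bMN\int_0^t Z(s-1)^+\,ds$, and Gronwall gives $Z(T)\le \e^{bMNT}$. This is a double exponential in $\sup|W|$, which is too crude. I would instead argue by the step method: on $[0,1]$ we have $G_b=\Phi\le M$, and on $[k,k+1]$,
\[
G_b(t)\le\Phi(t)\Bigl(Z(k)+bN\sup_{[k-1,k]}G_b\Bigr).
\]
Inductively, $\sup_{[0,T]}G_b\le C\,(1+bN)^{\lceil T\rceil}M^{\lceil T\rceil+1}$, which is polynomial in $M$ and $N$. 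Since $M^\alpha N^\beta\le \e^{(\alpha+\beta)\sigma B_T}$ up to constants, $\EE[G_b(T)^r]<\infty$ follows. This step is the main obstacle: one must avoid the Gronwall double exponential by exploiting that the delay makes the equation linear and explicit step by step.

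\emph{Third bound.} We have
\[
\frac{\Phi(s)}{\Phi(T)}=\exp\{\sigma(W(s)-W(T))+c(T-s)\}\le \e^{cT+2\sigma B_T},
\]
whose $r$-th moment is finite by the reflection principle.

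\emph{Log-integrability.} Use $|\log x|\le x+x^{-1}$ together with the two moment bounds to get $\log G_b(T)\in L^1(\PP)$.

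\emph{Derivative.} Let $\phi(p)=\EE[G_b(T)^{-p}]$, so $\phi(0)=1$. For $0<p\le1$,
\[
\frac{G^{-p}-1}{p}=-\int_0^1 \log G\,\e^{-\theta p\log G}\,d\theta,
\]
whose absolute value is at most $|\log G|\,(1+G^{-1})\le (G+G^{-1})(1+G^{-1})$. This dominating function is integrable by the moment bounds. Dominated convergence then gives $\phi'(0+)=-\EE[\log G_b(T)]$. If this derivative is negative, then $\phi(p)<\phi(0)=1$ for small $p>0$, which is the final claim.
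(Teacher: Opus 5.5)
Your proposal is correct and follows essentially the same route as the paper. The ingredients match: $G_b\ge\Phi$ for the negative moments; a unit-step bound on $Z=G_b/\Phi$ that is polynomial in exponentials of $B_T=\sup_{[0,T]}|W|$ for the positive moments, rather than the Gronwall double exponential; the reflection principle for the third bound; and dominated convergence with a mean-value-type domination for the derivative at $p=0^+$. The differences are cosmetic: you bound through $\sup_{[0,T]}\Phi$ and $\sup_{[0,T]}\Phi^{-1}$ where the paper bounds $\Phi(v-1)/\Phi(v)\le\e^{c+2\sigma B_T}$, and you use the single dominating function $(G+G^{-1})(1+G^{-1})$.
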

\begin{proof}
Put $c=\gamma+\sigma^2/2$ and $B_T=\sup_{0\le s\le T}|W(s)|$.
The reflection principle gives $\EE\e^{aB_T}<\infty$ for every $a>0$
\cite[Sec.~2.6.A]{KaratzasShreve1991}. Since $G_b\ge\Phi$, we obtain
\begin{equation}
 \begin{aligned}
 \EE G_b(T)^{-r}&\le\EE\Phi(T)^{-r}<\infty,\\
 \EE\sup_{0\le s\le T}
       \left(\frac{\Phi(s)}{\Phi(T)}\right)^r
 &\le\e^{rcT}\EE\e^{2r\sigma B_T}<\infty.
 \end{aligned}
\end{equation}
Writing $Z(t)=G_b(t)/\Phi(t)$, the integrating factor equation \eqref{eq:voc} becomes
\begin{equation}
Z(t)=1+b\int_1^t
\frac{\Phi(v-1)}{\Phi(v)}Z(v-1)\,dv,\qquad t\ge1,
\end{equation}
with $Z(t)=1$ on $[0,1]$. For $B_T=\sup_{0\le u\le T}|W(u)|$, we find
\begin{equation}
\frac{\Phi(v-1)}{\Phi(v)}
=e^{c+\sigma(W(v-1)-W(v))}
\le e^{c+2\sigma B_T},\qquad 1\le v\le T.
\end{equation}
Set $M_j=\sup_{0\le t\le\min\{j,T\}}Z(t)$, so that $M_0=1$.
Since $Z\ge1$,   we get
\begin{equation}
M_j\le 1+bT e^{c+2\sigma B_T}M_{j-1}
\le \bigl(1+bT e^{c+2\sigma B_T}\bigr)M_{j-1}.
\end{equation}
From bootstrapping, and taking into account $\Phi(t)\le e^{\sigma B_T}$, we deduce
\begin{equation}
\sup_{0\le s\le T}G_b(s)
\le e^{\sigma B_T}
\bigl(1+bT e^{c+2\sigma B_T}\bigr)^{\lceil T\rceil}.
\end{equation}
Hence $\EE[G_b(T)^r]<\infty$ for every $r>0$.
Since $|\log x|\le(x^r+x^{-r})/r$, this implies
$\log G_b(T)\in L^1(\mathbb P)$. Moreover, for fixed $p_0,\eta>0$ and
$0<p\le p_0$, the mean-value theorem gives
\begin{equation}
 \left|\frac{x^{-p}-1}{p}\right|
 \le|\log x|\max\{1,x^{-p_0}\}
 \le\eta^{-1}\bigl(x^\eta+x^{-(p_0+\eta)}\bigr),\qquad x>0.
\end{equation}
For $x=G_b(T)$, the last expression is integrable and the difference
quotient converges almost surely to $-\log G_b(T)$ as $p\downarrow0$.
Dominated convergence \cite[Thm.~1.6.7]{Durrett2019} proves the derivative
identity. Clearly $\EE[G_b(T)^0]=1$, and the final assertion readily follows.
\end{proof}

	\subsection{Finite-time comparison with the nonlinear equation}\label{sec3.4}

The  lemma below transfers a finite-time negative-moment estimate for
$G_b$ to the nonlinear equation \eqref{eq:main}, with the same exponent $p$.

\begin{lemma}
\label{lem:finite-time-comparison}
Assume  hypotheses \textnormal{(A1)} or \textnormal{(A2)}, and \textnormal{(B)}. Let $X$ solve the nonlinear equation \eqref{eq:main} with nonnegative
initial history $\Psi$, and assume $\Psi(0)>0$ almost surely. Let $p,b,\delta,T>0$ satisfy
$rf(x)\ge bx$ for $0\le x\le\delta$, and let $G_b$ be as in
\eqref{eq:linear-fundamental}. Define
\begin{equation}
\label{eq:block-constants}
 q:=\EE[G_b(T)^{-p}],\qquad
 D:=\delta^{-p}\EE\sup_{0\le s\le T}
       \left(\frac{\Phi(s)}{\Phi(T)}\right)^p.
\end{equation}
Then $q,D<\infty$, and, for each fixed $t\ge0$, almost surely, 
\begin{equation}
\label{eq:block-contraction}
 \EE[X(t+T)^{-p}\mid\mathcal F_t]\le qX(t)^{-p}+D.
\end{equation}
In particular, this is a contraction estimate whenever $q<1$.
If $\EE\log G_b(T)>0$, then $q<1$ for all sufficiently small $p>0$.
\end{lemma}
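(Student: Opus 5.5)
Finiteness of $q$ and $D$ follows directly from Lemma~\ref{lem:finite-time-log-moments}, and so does the final assertion that $q<1$ for small $p$ when $\EE\log G_b(T)>0$. The substance is the conditional estimate \eqref{eq:block-contraction}. I will prove it by a pathwise dichotomy on the window $[t,t+T]$, using the stopping time
\begin{equation*}
 \rho:=\inf\{s\ge t: X(s)\ge\delta\}.
\end{equation*}

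\emph{Case 1: $\rho\le t+T$.} The lower comparison \eqref{eq:voc-lower} with $s=\rho$ gives $X(t+T)\ge \delta\,\Phi(t+T)/\Phi(\rho)$. Hence
\begin{equation*}
 X(t+T)^{-p}\le \delta^{-p}\sup_{t\le s\le t+T}\bigl(\Phi(s)/\Phi(t+T)\bigr)^p .
\end{equation*}
The ratios $\Phi(s)/\Phi(t+T)$ depend only on the Brownian increments after time $t$. These increments are independent of $\mathcal F_t$ and have the same law as the corresponding ratios on $[0,T]$. So the conditional expectation of this bound, even without the indicator of Case 1, is at most $D$.

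\emph{Case 2: $\rho>t+T$, i.e.\ $X<\delta$ on $[t,t+T]$.} Here I compare $X$ from below with a linear process started at time $t$. Set $Y(u):=X(t+u)$. On $\{u<\rho-t\}$ with $u\ge 1$ we have $X(t+u-1)\in[0,\delta)$, so $rf(X(t+u-1))\ge bX(t+u-1)$. For $u\in[0,1]$ the delayed argument lies in the known history $X_t$, which is nonnegative, so $rf\ge0$ there.

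Write the equation for $Y$ in the form \eqref{eq:linear:inhomogeneous} with forcing
\begin{equation*}
 g(u):=rf(X(t+u-1))-bX(t+u-1).
\end{equation*}
For $u<1$ the history term $b\Psi$ is added back, so I instead use history zero and forcing $rf(X(t+u-1))\ge0$. Lemma~\ref{lem:linear-representation}, driven by $W^{(t)}$, then gives
\begin{equation*}
 Y(u)=G_b^{(t)}(u)X(t)+\int_0^u G_b^{(t+s)}(u-s)\,g(s)\,ds ,
\end{equation*}
with $g\ge0$ as long as $s<\rho-t$. The kernels $G_b^{(\cdot)}$ are nonnegative. Therefore on Case 2 we get $X(t+T)\ge G_b^{(t)}(T)X(t)$, hence $X(t+T)^{-p}\le X(t)^{-p}G_b^{(t)}(T)^{-p}$. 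Conditioning on $\mathcal F_t$, and using that $G_b^{(t)}(T)$ is independent of $\mathcal F_t$ with the law of $G_b(T)$, as in the proof of Lemma~\ref{lem:conditional-negative-moment}, gives the bound $qX(t)^{-p}$.

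Adding the two cases gives $\EE[X(t+T)^{-p}\mid\mathcal F_t]\le qX(t)^{-p}+D$.

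\emph{Main obstacle.} The delicate point is the positivity of $g$ only up to $\rho$. After $\rho$ the forcing may be negative, so the representation alone does not give the bound on Case 2 at time $t+T$. This is harmless, because on Case 2 we have $\rho>t+T$, so only $s\in[0,T]$ with $s<\rho-t$ enters the integral up to $u=T$. A cleaner way is to compare pathwise on $[t,(t+T)\wedge\rho]$ with the linear solution, both written via integrating factors. On that interval $Z$-type differences satisfy a delay inequality with nonnegative coefficient, so monotonicity gives $X\ge X(t)G_b^{(t)}$, by the same reasoning as Part II of Proposition~\ref{prop:wellposed}.

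A second point to check is that $X(t)$, a random $\mathcal F_t$-measurable initial value, may be factored out of the conditional expectation. This holds because $X(t)^{-p}$ is $\mathcal F_t$-measurable and nonnegative, so conditional expectation of nonnegative products applies without integrability issues.
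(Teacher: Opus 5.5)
Your proposal is correct and follows essentially the same route as the paper. Both arguments use the same dichotomy according to whether $X$ reaches level $\delta$ on $[t,t+T]$. On the event that it does not, they restart at time $t$ with zero history and forcing $rf(X(t+v-1))-b\mathbf{1}_{\{v\ge1\}}X(t+v-1)\ge0$ in the fundamental-solution representation. On the event that it does, they use the lower comparison \eqref{eq:voc-lower} from the hitting time. Both then condition on $\mathcal F_t$, using the independence of the Brownian increments after $t$.
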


\begin{proof}
Lemma~\ref{lem:finite-time-log-moments} gives $q,D<\infty$ and, when
$\EE\log G_b(T)>0$, also $q<1$ for all sufficiently small $p>0$.
Next, we prove \eqref{eq:block-contraction} for every $p>0$, without any smallness
restriction.
To do so,  fix time $t\ge0$, 
write $\Phi^{(t)}(u)=\Phi(t+u)/\Phi(t)$ and let
 $\zeta:=\inf\{u\ge0:X(t+u)\ge\delta\}$, with $\inf\varnothing=\infty$.

For the sample subspace $\{\zeta>T\}$, we perform a routine transformation encoding the initial history as forcing, cf. \cite{AgarwalBerezanskyBravermanDomoshnitsky2012,AzbelevRakhmatullina1972}.
Rewrite  equation \eqref{eq:main} starting at time $t$ with zero initial history, initial value
$X(t)$, and forcing
\begin{equation}
 g(v):=rf(X(t+v-1))-b\mathbf1_{\{v\ge1\}}X(t+v-1).
\end{equation}
This forcing is nonnegative for $0\le v\le T$ on $\{\zeta>T\}$. Indeed, for $v<1$ use $f\ge0$, and for $v\ge1$ use the local lower bound
$rf(x)\ge bx$. Formula~\eqref{eq:linear-history} therefore gives
\begin{equation}
 X(t+u)=X(t)G_b^{(t)}(u)
 +\int_0^uG_b^{(t+v)}(u-v)g(v)\,dv
 \ge X(t)G_b^{(t)}(u),\qquad 0\le u\le T.
\end{equation}
On $\{\zeta\le T\}$, $X(t+\zeta)\ge\delta$ and
\eqref{eq:voc-lower} gives
$X(t+T)\ge\delta\Phi^{(t)}(T)/\Phi^{(t)}(\zeta)$.
Thus, pathwise,
\begin{equation}
 X(t+T)^{-p}\le X(t)^{-p}G_b^{(t)}(T)^{-p}
 +\delta^{-p}\sup_{0\le u\le T}
       \left(\frac{\Phi^{(t)}(u)}{\Phi^{(t)}(T)}\right)^p.
\end{equation}
The Brownian increments after $t$ are independent of $\mathcal F_t$, so
conditioning proves \eqref{eq:block-contraction}.
\end{proof}

\section{Main results}
\label{sec:proofs}
\noindent We are  now ready to prove the main results of this paper. Although the conditional moments depend
on the entire initial history, the bounds below depend on it only through
$\Psi(0)$. For the boundedness away in probability, we require the following additional hypothesis.
\begin{itemize}
    \item[(H)] There exist $b,\delta,T>0$ such that
$rf(x)\ge bx$ for $0\le x\le\delta$ and the fundamental solution
\eqref{eq:linear-fundamental} satisfies $\EE\log G_b(T)>0$.
\end{itemize}

\begin{theorem}
\label{thm:negative-moment}
Assume  hypotheses \textnormal{(A1)} or \textnormal{(A2)}, and \textnormal{(B)}.  Let $X$ solve the nonlinear equation \eqref{eq:main}
with a nonnegative initial history $\Psi$, and assume $\Psi(0)>0$
almost surely. Furthermore, assume hypothesis \textnormal{(H)}. Fix  any
$p>0$ for which $q<1$ holds\footnote{We can take $p>0$ such that $q<1$ by Lemma \ref{lem:finite-time-comparison}}, where both $q$ and $D$ are as in equation
\eqref{eq:block-constants}. Set
\begin{equation}
 a:=\gamma+\frac{p+1}{2}\sigma^2,\qquad
 M:=\e^{paT}\Psi(0)^{-p},\qquad
 C:=\frac{\e^{paT}D}{1-q}.
\end{equation}
For each fixed $t\ge0$, almost surely,
\begin{equation}
\label{eq:main-negative-decay}
 \EE[X(t)^{-p}\mid\mathcal F_0]
 \le C+q^{\lfloor t/T\rfloor}(M-C),
\end{equation}
and consequently
\begin{equation}
\label{eq:negative-uniform}
 \EE[X(t)^{-p}\mid\mathcal F_0]\le\max\{C,M\}.
\end{equation}
For every $R\ge0$,
\begin{align}
\label{eq:lower-tail-uniform}
 \sup_{t\ge0}\PP\{X(t)\le\e^{-R}\}
 &\le\EE\min\{1,\e^{-pR}\max\{C,M\}\},\\
\label{eq:lower-tail-asymptotic}
 \limsup_{t\to\infty}\PP\{X(t)\le\e^{-R}\}
 &\le\min\{1,C\e^{-pR}\}.
\end{align}
In particular, $X$ is bounded away from zero in probability uniformly
in time. 
\end{theorem}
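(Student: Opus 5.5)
The proof will iterate the one-block contraction of Lemma~\ref{lem:finite-time-comparison} along the grid $t_0+kT$, after first controlling the initial partial block.

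\textbf{Initial partial block.} Write $t=kT+s$ with $k=\lfloor t/T\rfloor$ and $0\le s<T$. By \eqref{eq:voc-lower} with lower time $0$, $X(s)\ge \Psi(0)\Phi(s)$, so
\begin{equation*}
\EE[X(s)^{-p}\mid\mathcal F_0]\le \Psi(0)^{-p}\,\EE[\Phi(s)^{-p}].
\end{equation*}
Here $\Phi(s)^{-p}$ is independent of $\mathcal F_0$, and the Gaussian moment formula gives $\EE\Phi(s)^{-p}=\e^{p a s}$ with $a=\gamma+\frac{p+1}{2}\sigma^2$. Hence
\begin{equation*}
\EE[X(s)^{-p}\mid\mathcal F_0]\le \e^{paT}\Psi(0)^{-p}=M.
\end{equation*}

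\textbf{Iteration.} Set $u_j:=\EE[X(s+jT)^{-p}\mid\mathcal F_0]$. Apply \eqref{eq:block-contraction} at time $s+jT$ and take $\EE[\cdot\mid\mathcal F_0]$ using the tower property. This gives $u_{j+1}\le q u_j+D$. Induction yields
\begin{equation*}
u_k\le q^k u_0+D\,\frac{1-q^k}{1-q}\le q^k M+\frac{D}{1-q}(1-q^k).
\end{equation*}
Since $D/(1-q)\le C$ and $q<1$, the right side is at most $C+q^k(M-C)$. (If $M<C$ this needs care; one can simply bound $D/(1-q)\,(1-q^k)\le C(1-q^k)$ directly.) This is \eqref{eq:main-negative-decay}. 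Then \eqref{eq:negative-uniform} follows because $C+q^k(M-C)$ is a convex combination of $C$ and $M$.

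\textbf{Technical points.} Two things must be checked. First, conditional expectations of nonnegative variables are well defined even if possibly infinite; the bounds show they are finite. Second, $\EE[\cdot\mid\mathcal F_0]$ of the conditional bound in \eqref{eq:block-contraction} is legitimate for fixed times. Neither seems to be a real obstacle.

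\textbf{Tail bounds.} The conditional Markov inequality gives
\begin{equation*}
\PP\{X(t)\le \e^{-R}\mid\mathcal F_0\}\le \min\{1,\e^{-pR}\EE[X(t)^{-p}\mid\mathcal F_0]\}\le\min\{1,\e^{-pR}\max\{C,M\}\}.
\end{equation*}
Taking expectations yields \eqref{eq:lower-tail-uniform}. For \eqref{eq:lower-tail-asymptotic}, use the bound $\min\{1,\e^{-pR}(C+q^{\lfloor t/T\rfloor}(M-C)^+)\}$. As $t\to\infty$ this converges almost surely to $\min\{1,C\e^{-pR}\}$, and it is bounded by $1$, so dominated convergence (reverse Fatou) gives the limsup statement.

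\textbf{Conclusion.} Since $\Psi(0)>0$ a.s., $M<\infty$ a.s. Thus $\min\{1,\e^{-pR}\max\{C,M\}\}\to0$ almost surely as $R\to\infty$, and dominated convergence in \eqref{eq:lower-tail-uniform} gives \eqref{eq:bndawayzeroinprob} uniformly in $t$. No integrability of $\Psi(0)^{-p}$ is needed because of the truncation by $1$.

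\textbf{Main obstacle.} The only delicate step is bookkeeping the conditioning when $M$ is random, in particular using \eqref{eq:block-contraction} iterated under $\mathcal F_0$ rather than taking unconditional expectations. That is why the argument keeps everything conditional on $\mathcal F_0$ until the final step.
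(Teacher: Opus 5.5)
Your proof is correct and is essentially the paper's argument: you iterate \eqref{eq:block-contraction} under $\EE[\cdot\mid\mathcal F_0]$ via the tower property, absorb a partial block of length $s<T$ with \eqref{eq:voc-lower} and the Gaussian moment formula, and finish with the conditional Markov inequality and dominated convergence. The only difference is the order: you treat the partial block $[0,s]$ first and iterate on the grid $s+jT$, whereas the paper iterates on $jT$ and absorbs $[nT,nT+s]$ at the end; your parenthetical worry about $M<C$ is unnecessary, since $D/(1-q)\le C$ settles the comparison in all cases.
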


\begin{proof}
Iterating \eqref{eq:block-contraction} and using the tower property gives
\begin{equation}
 \EE[X(nT)^{-p}\mid\mathcal F_0]
 \le q^n\Psi(0)^{-p}+\frac{D(1-q^n)}{1-q},\qquad n\ge0.
\end{equation}
For $t=nT+s$, $0\le s<T$, inequality \eqref{eq:voc-lower} and the Gaussian
moment formula give
\begin{equation}
 \EE[X(t)^{-p}\mid\mathcal F_0]
 \le\e^{pas}\EE[X(nT)^{-p}\mid\mathcal F_0]
 \le C+q^n(M-C).
\end{equation}
This proves
\eqref{eq:main-negative-decay} and \eqref{eq:negative-uniform}.
Then the conditional Markov's inequality yields
\begin{equation}
 \PP\{X(t)\le\e^{-R}\mid\mathcal F_0\}
 \le\min\{1,\e^{-pR}\EE[X(t)^{-p}\mid\mathcal F_0]\}.
\end{equation}
Taking expectations proves \eqref{eq:lower-tail-uniform}; using
\eqref{eq:main-negative-decay} and dominated convergence proves
\eqref{eq:lower-tail-asymptotic}. Since $M<\infty$ almost surely,
dominated convergence as $R\to\infty$ gives  boundedness away
from zero in probability, which finishes the proof.
\end{proof}

\begin{remark}[Continuous-time exponential bound]
\label{rem:continuous-negative-decay}
Estimate \eqref{eq:main-negative-decay} also gives an exponential
bound in continuous time. Indeed, setting
$\lambda_p:=-T^{-1}\log q>0$, one has
$q^{\lfloor t/T\rfloor}\le q^{-1}\e^{-\lambda_p t}$, and hence
\begin{equation}
 \EE[X(t)^{-p}\mid\mathcal F_0]
 \le C+q^{-1}\e^{-\lambda_p t}\max\{0,(M-C)\},
 \qquad t\ge0.
\end{equation}
\end{remark}
\begin{remark}
The local lower bound  $rf(x)\ge bx$
on $[0,\delta]$, together with $f\ge0$, also gives the global estimate
\begin{equation}
 rf(x)\ge bx-b\delta^{-p}x^{p+1},\qquad x\ge0,\quad p>0.
\end{equation}
Indeed, for $x\le\delta$, one applies the local bound, while for $x\ge\delta$, the
right-hand side is clearly nonpositive. Observe that one could have proved a version of \eqref{eq:holder-moment-inequality} for the negative moments of $X$ more directly with the help of this global estimate. Nevertheless, the comparison with the linear equation above handles
the same transition by stopping at $\delta$ and using
\eqref{eq:voc-lower} afterwards. 
\end{remark}

Recall that boundedness above in probability was proved earlier in \cite[Prop.~3.5, Prop.~5.5]{vdBMG2026}. For completeness, we give a conditional-moment proof parallel to Theorem~\ref{thm:negative-moment}.

\begin{theorem}
\label{thm:positive-moment}
Assume  hypotheses \textnormal{(A1)} with $\norm{f}_\infty<\infty$ or \textnormal{(A2)}, and \textnormal{(B)}.   Let $X$ solve the nonlinear equation \eqref{eq:main} with nonnegative
initial history $\Psi$. Let $q\ge1$ and set
\begin{equation}
 c:=\gamma-\frac{q-1}{2}\sigma^2.\label{eq:c}
\end{equation}
Assume $c>0$ holds.  Then, for each fixed $t\ge0$, almost surely,
\begin{equation}
\label{eq:main-positive-decay}
 \left(\EE\!\left[X(t)^q\mid\mathcal F_0\right]\right)^{1/q}
 \le \e^{-ct}\Psi(0)
 +\frac{r\norm{f}_\infty}{c}
 (1-\e^{-ct}).
\end{equation}
and consequently
\begin{equation}
\label{eq:positive-uniform}
 \EE\!\left[X(t)^q\mid\mathcal F_0\right]
 \le
 \max\left\{\Psi(0),
 \frac{r\norm{f}_\infty}{c}\right\}^{q}.
\end{equation}
For every $R\ge0$,
\begin{align}
\label{eq:upper-tail-uniform}
 \sup_{t\ge0}\PP\{X(t)\ge\e^R\}
 &\le
 \EE\min\left\{1,\e^{-qR}
 \max\left\{\Psi(0),
 \frac{r\norm{f}_\infty}{c}
 \right\}^{q}\right\},\\
\label{eq:upper-tail-asymptotic}
 \limsup_{t\to\infty}\PP\{X(t)\ge\e^R\}
 &\le
 \min\left\{1,
 \left(\frac{r\norm{f}_\infty}{c}\right)^q
 \e^{-qR}\right\}.
\end{align}
In particular, $X$ is bounded above in probability uniformly in time.
\end{theorem}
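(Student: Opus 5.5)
We use the integrating factor representation \eqref{eq:voc} from Proposition~\ref{prop:wellposed}, now as an upper bound. Since $f\ge0$ on $[0,\infty)$ and $0\le f\le\norm{f}_\infty$ along the nonnegative solution, \eqref{eq:voc} gives, pathwise,
\begin{equation*}
 0\le X(t)=\Phi(t)\Psi(0)+r\int_0^t\frac{\Phi(t)}{\Phi(s)}f(X(s-1))\,ds
 \le \Phi(t)\Psi(0)+r\norm{f}_\infty\int_0^t\frac{\Phi(t)}{\Phi(s)}\,ds .
\end{equation*}
Under (A1) with $\norm{f}_\infty<\infty$ or under (A2), this bound is available in both cases.

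We then take conditional $L^q$ norms with respect to $\mathcal F_0$ and apply Minkowski's inequality. Minkowski's integral inequality applies to the $ds$ integral, since the integrand is nonnegative and jointly measurable. For the first term, $\Psi(0)$ is $\mathcal F_0$-measurable and $\Phi(t)$ is independent of $\mathcal F_0$, so
$\bigl(\EE[\Phi(t)^q]\bigr)^{1/q}\Psi(0)$. For the integral term, $\Phi(t)/\Phi(s)$ depends only on increments of $W$ after $s$, hence is independent of $\mathcal F_0$. It has the law of $\Phi(t-s)$. This gives
\begin{equation*}
 \bigl(\EE[X(t)^q\mid\mathcal F_0]\bigr)^{1/q}\le \Psi(0)\bigl(\EE\Phi(t)^q\bigr)^{1/q}
 +r\norm{f}_\infty\int_0^t\bigl(\EE\Phi(t-s)^q\bigr)^{1/q}\,ds .
\end{equation*}
The Gaussian moment formula gives
$\EE\Phi(u)^q=\exp\{q(\tfrac{q}{2}\sigma^2-\gamma-\tfrac12\sigma^2)u\}=\e^{-qcu}$, with $c$ as in \eqref{eq:c}. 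Hence $(\EE\Phi(u)^q)^{1/q}=\e^{-cu}$, and integrating over $s\in[0,t]$ yields exactly \eqref{eq:main-positive-decay}.

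Estimate \eqref{eq:positive-uniform} follows because the right side of \eqref{eq:main-positive-decay} is a convex combination of $\Psi(0)$ and $r\norm{f}_\infty/c$. For \eqref{eq:upper-tail-uniform}, we apply the conditional Markov inequality $\PP\{X(t)\ge \e^R\mid\mathcal F_0\}\le\min\{1,\e^{-qR}\EE[X(t)^q\mid\mathcal F_0]\}$ and then take expectations. For \eqref{eq:upper-tail-asymptotic}, we use \eqref{eq:main-positive-decay} with $t\to\infty$, where $\e^{-ct}\to0$, together with dominated convergence, the integrands being bounded by $1$. Boundedness above in probability then follows from \eqref{eq:upper-tail-uniform} by dominated convergence as $R\to\infty$, since $\Psi(0)<\infty$ almost surely.

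The only delicate step is the conditional Minkowski step. It requires the joint measurability of $(s,\omega)\mapsto\Phi(t)/\Phi(s)$, which holds since the map is continuous in $s$. It also requires the fact that conditioning on $\mathcal F_0$ only freezes $\Psi(0)$. This holds because after bounding $f$ by $\norm{f}_\infty$ no dependence on $X$ remains in the integral.
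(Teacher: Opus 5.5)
Your proposal is correct and takes essentially the same route as the paper. You bound $f$ by $\norm{f}_\infty$ in \eqref{eq:voc}, apply the conditional Minkowski inequality, compute $\bigl(\EE[(\Phi(t)/\Phi(s))^q]\bigr)^{1/q}=\e^{-c(t-s)}$ from independence of the Brownian increments and the Gaussian moment formula, and conclude with the convex-combination observation, the conditional Markov inequality and dominated convergence. Your remarks on joint measurability and on the final $R\to\infty$ step only make explicit what the paper leaves implicit.
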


\begin{proof}
Fix $t\ge0$. For $q\ge1$, write
\begin{equation}
 \|H\|_{q,0}:=
 \left(\EE\!\left[|H|^q\mid\mathcal F_0\right]\right)^{1/q}.
\end{equation}
From representation
\eqref{eq:voc}, the conditional
Minkowski integral inequality, and $0\le f\le\norm{f}_\infty$ on
$[0,\infty)$, we obtain the estimate
\begin{equation}
 \|X(t)\|_{q,0}
 \le \Psi(0)\|\Phi(t)\|_{q,0}
 +r\norm{f}_\infty
 \int_0^t\left\|\frac{\Phi(t)}{\Phi(s)}\right\|_{q,0}\,ds.
\end{equation}
The tower property and the Brownian-increment law|together with the Gaussian
moment-generating formula|yields
\begin{equation}
 \left\|\frac{\Phi(t)}{\Phi(s)}\right\|_{q,0}
 =\e^{-c(t-s)},
 \qquad
 \|\Phi(t)\|_{q,0}=\e^{-ct},
\end{equation}
with $c>0$ as in \eqref{eq:c}.
Hence
\begin{equation}
 \left(\EE\!\left[X(t)^q\mid\mathcal F_0\right]\right)^{1/q}
 \le \e^{-ct}\Psi(0)
 +\frac{r\norm{f}_\infty}{c}
 (1-\e^{-ct}),
\end{equation}
which is \eqref{eq:main-positive-decay}.  Observe that the right-hand side is a convex
combination of both $\Psi(0)$ and $r\norm{f}_\infty/c$; this proves
\eqref{eq:positive-uniform}.

For each fixed $t\ge0$, the conditional Markov's inequality gives, almost surely,
\begin{equation}
 \PP\{X(t)\ge\e^R\mid\mathcal F_0\}
 \le \e^{-qR}\EE\!\left[X(t)^q\mid\mathcal F_0\right].
\end{equation}
Insert \eqref{eq:positive-uniform}, truncate the bound by $1$, and take
expectations.  This gives \eqref{eq:upper-tail-uniform}.  Exploiting the inequality
\eqref{eq:main-positive-decay} before taking expectations and then letting
$t\to\infty$ gives
\eqref{eq:upper-tail-asymptotic}, by the dominated convergence  theorem after truncation by $1$.
\end{proof}

\begin{proof}[Proof of Theorem~\ref{thm:main}]
Put $c=\gamma+\sigma^2/2$. Fix any $p>0$ satisfying 
\begin{equation}
 a_p:=c+\frac{p\sigma^2}{2}<r\kappa,
\end{equation}
which is possible under \eqref{eq:condition}, and choose
$a_p<b<r\kappa$. By Lemma~\ref{lem:holder-negative-moment},
\begin{equation}
 \EE[G_b(t)^{-p}]
 \le\exp\left\{pa_p-\frac{p(b-a_p)}{1+b}(t-1)\right\},
 \qquad t\ge1.
\end{equation}
Choose
\begin{equation}
 T>1+\frac{a_p(1+b)}{b-a_p}.
\end{equation}
Then $q:=\EE[G_b(T)^{-p}]<1$, and Jensen's inequality gives
\begin{equation}
 \EE\log G_b(T)
 \ge-\frac1p\log\EE[G_b(T)^{-p}]>0.
\end{equation}
Since $b<r\kappa$, there is $\delta>0$ such that $rf(x)\ge bx$
on $[0,\delta]$. Lemma~\ref{lem:finite-time-comparison} supplies
\eqref{eq:block-contraction} with this same $p$, so
Theorem~\ref{thm:negative-moment} gives the conditional negative-moment
bounds and boundedness away from zero in probability.
Global existence and positivity follow from
Proposition~\ref{prop:wellposed}.
\end{proof}

\section{Numerics and interpretation of main results}
\label{sec:5}
\noindent We begin in Section \ref{sec:numerics} with numerical simulations that illustrate our main results on boundedness in probability and the existence of invariant measures. In Section \ref{sec:history}, we particularly discuss the role of conditioning on $\mathcal F_0$ in our main results. Ultimately, we  investigate in Section \ref{sec:sharpness} the sharpness of the persistence condition \eqref{eq:condition},  by relating the nonlinear dynamics to the first Lyapunov exponent of the linear comparison equation \eqref{eq:linear-fundamental}.

\subsection{Numerics}\label{sec:numerics} Figures \ref{fig:sigma:tiny}--\ref{fig:sigma>1} show  simulations of the log-transformed equation \eqref{eq:log}. Note  that taking the exponential of a solution yields a solution to \eqref{eq:stoch-mg-intro}. In particular, we fix the parameters  $\gamma=0.5$ and $r=1$, increase the noise parameters $\sigma $ throughout the figures, and take the Mackey-Glass functional \eqref{eq:1.2} with $n=30.$ 
For more details on how the numerics are performed, we refer to \cite{vdBMG2026,vdBWright2026}. 

\begin{figure}[!t]
    \centering
    \includegraphics[width=0.99\linewidth, trim = 0 0.9cm 0 0cm, clip]{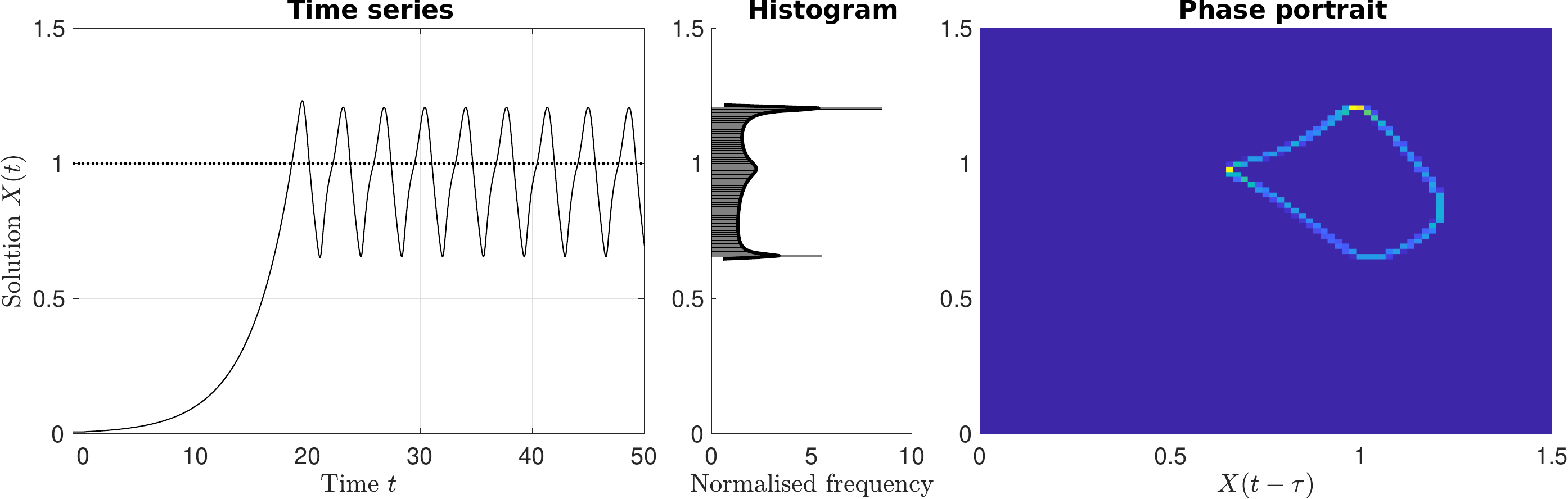}
    \includegraphics[width=0.99\linewidth]{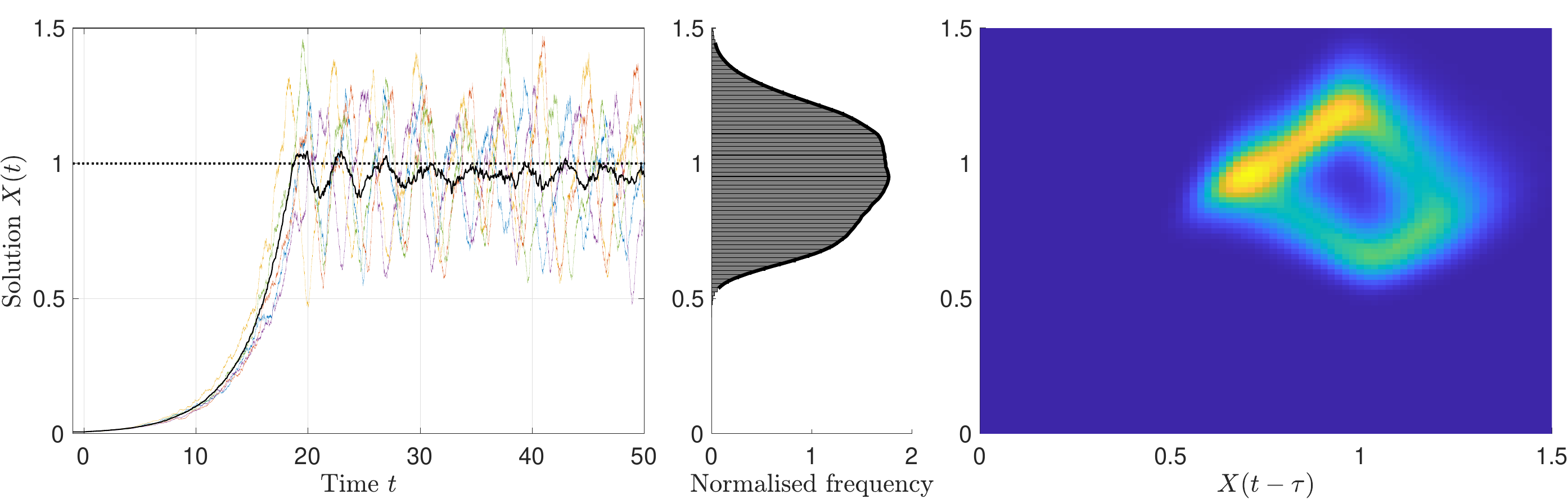}

\caption{Simulation of  100 sample paths of the solution to  equation \eqref{eq:stoch-mg-intro}  on the time interval $[0,5000]$, together with the parameters $\gamma=0.5,$ $r=1,$ $n=30$, 
$f(x)=x(1+x^n)^{-1}$, and $\sigma=0$ (first row, deterministic) or $\sigma=0.1$ (second row). The initial datum is deterministic, $\Psi=\psi$ with $\psi=\exp(-5)$. The left panels show a subset of 10 trajectories on $[-1,50]$; the dotted line $x_*=1$ represents the fixed point of the deterministic equation, while the black curves illustrate the median  of  the 100 sample paths. The middle panels display histograms of all values observed on $[4000,5000]$, to exclude  transient behaviour of  the initial condition. The right panels show heat maps of the pair $(X(t-\tau),X(t)),$ again for all $t\in [4000,5000]$. }
    \label{fig:sigma:tiny}
\end{figure}

\begin{figure}[!t]
    \centering
    \includegraphics[width=0.99\linewidth, trim = 0 0.9cm 0 0cm, clip]{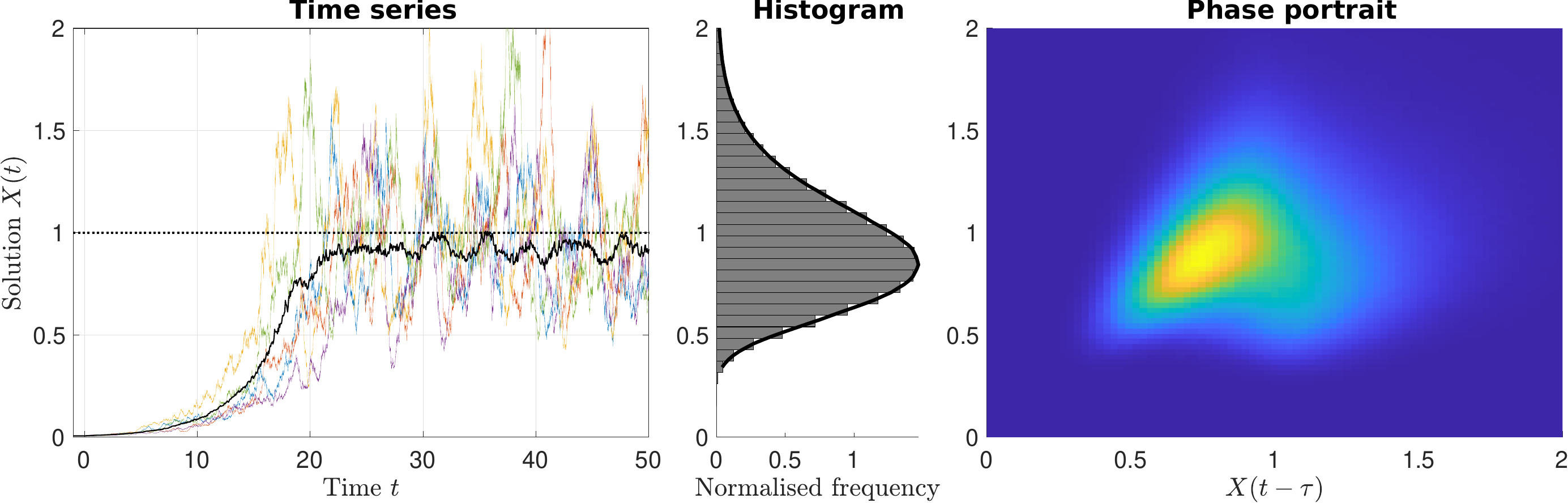}
    \includegraphics[width=0.99\linewidth]{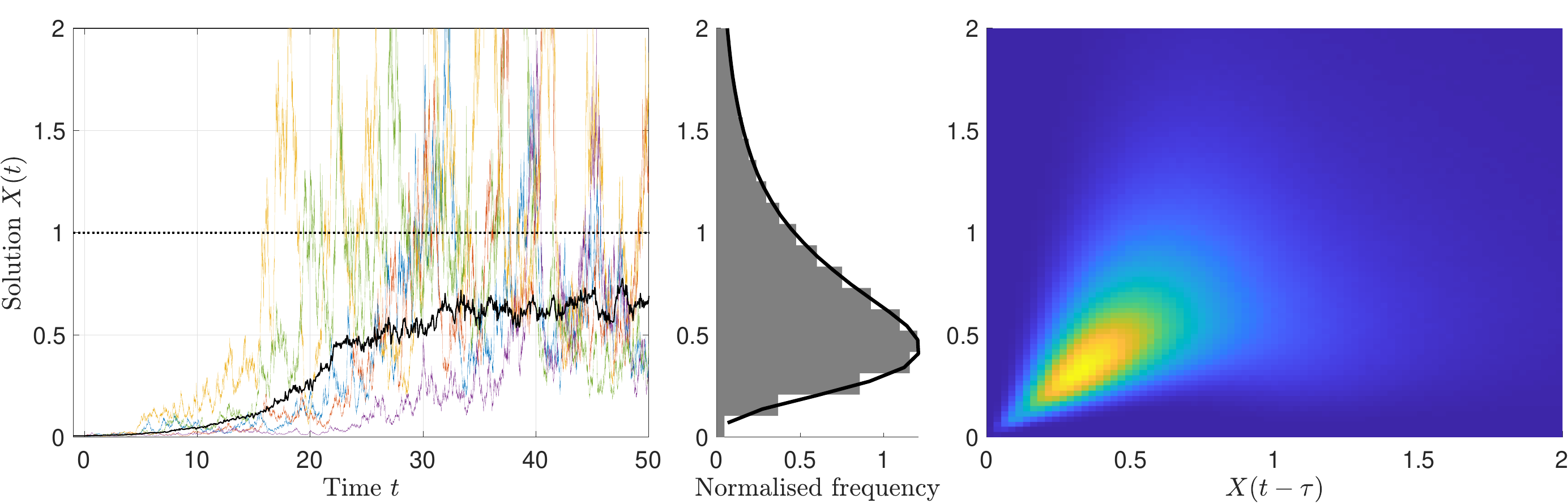}
    \caption{The same setting as in Figure \ref{fig:sigma:tiny}, but now with $\sigma=0.3$ (first row) and $\sigma=0.7$ (second row). Qualitatively, as the noise parameter $\sigma$ increases towards a certain threshold $\sigma_*>0$, the histogram and phase portrait tend to $\delta_0$ and $\delta_0\times \delta_0$, respectively. Differently put, the non-trivial measure becomes trivial once $\sigma$ is large enough.}
    \label{fig:sigma:small}
\end{figure}

\begin{figure}[!t]
    \centering
    \includegraphics[width=0.5\linewidth, trim = 0 .5cm 0 .45cm, clip]{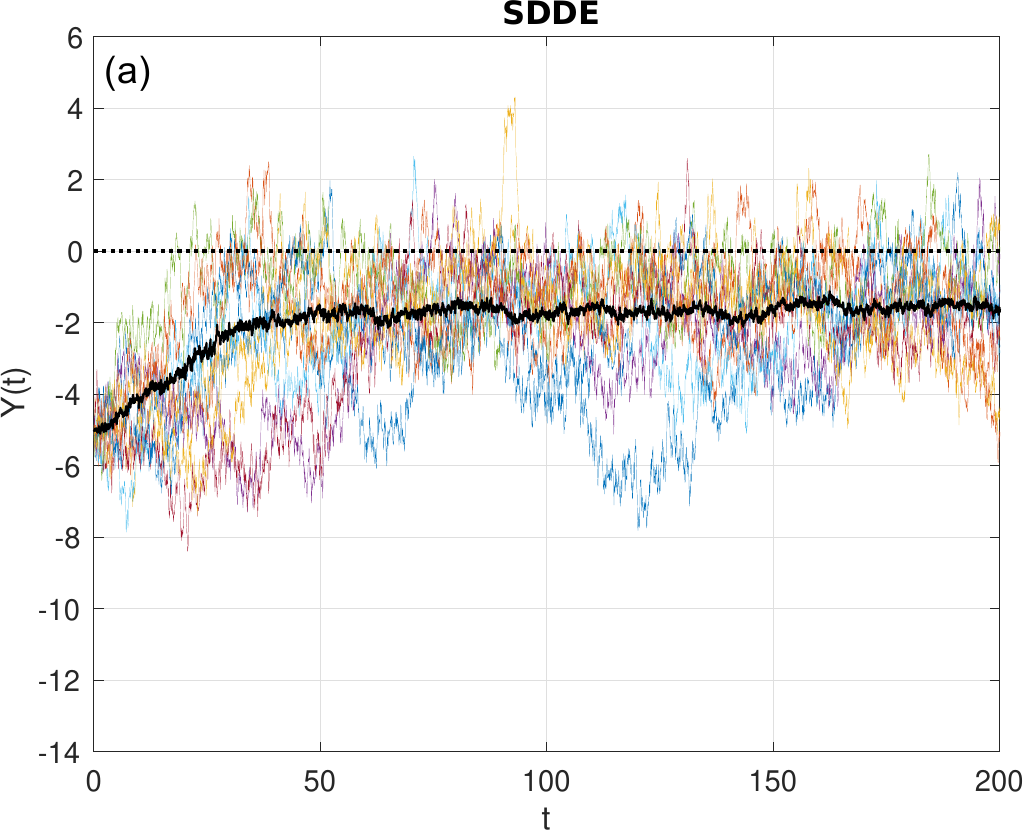}
    \,\includegraphics[width=0.4725\linewidth, trim = .88cm .5cm 0cm .45cm, clip]{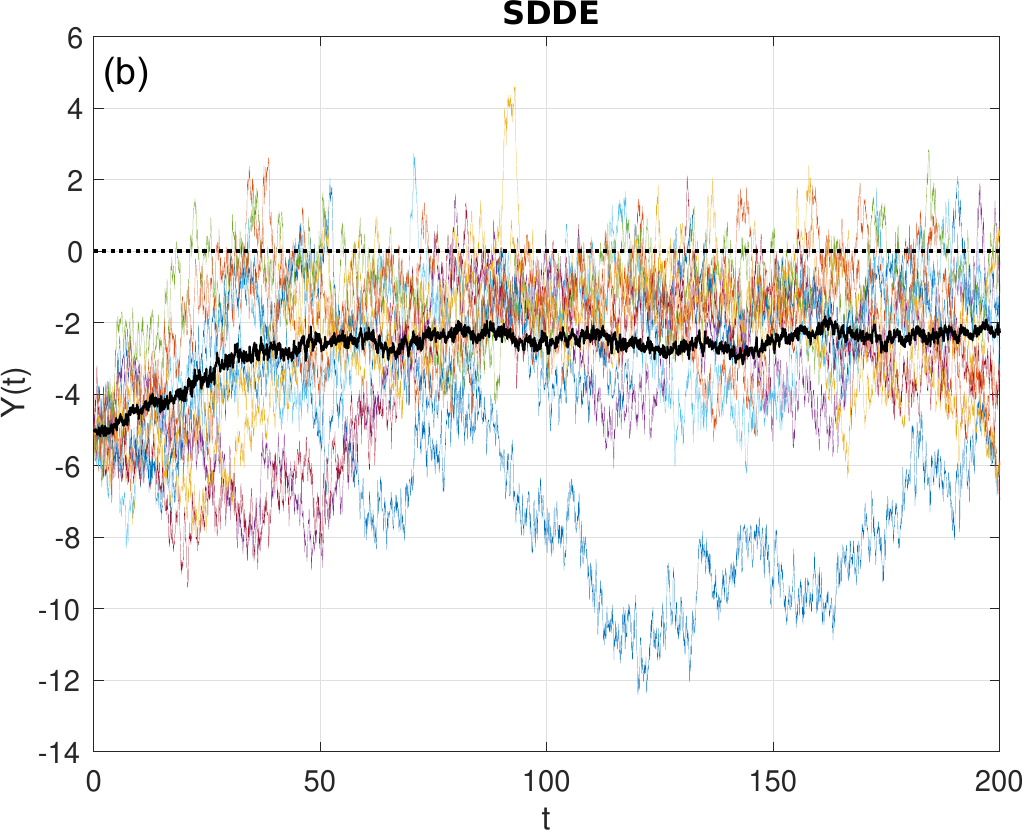}\\\vspace{.2cm}

    \includegraphics[width=0.5\linewidth, trim = 0 0cm 0 .45cm, clip]{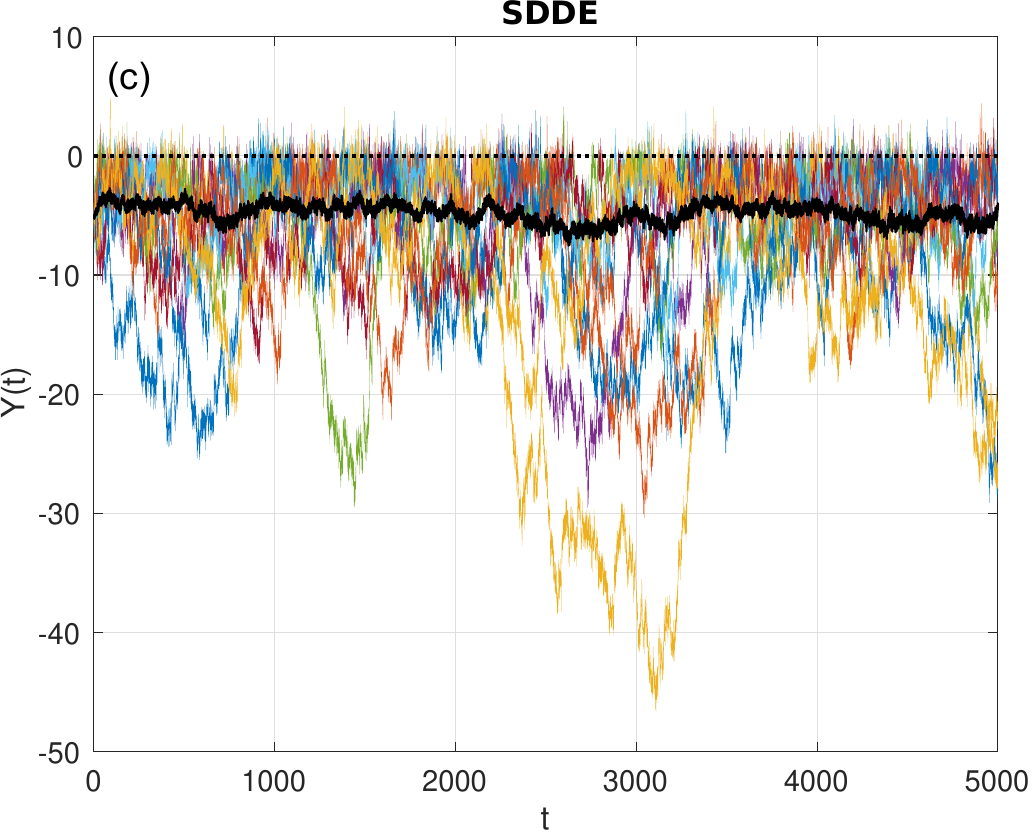}
    \,\includegraphics[width=0.4825\linewidth, trim = .88cm 0.0cm 0cm .45cm, clip]{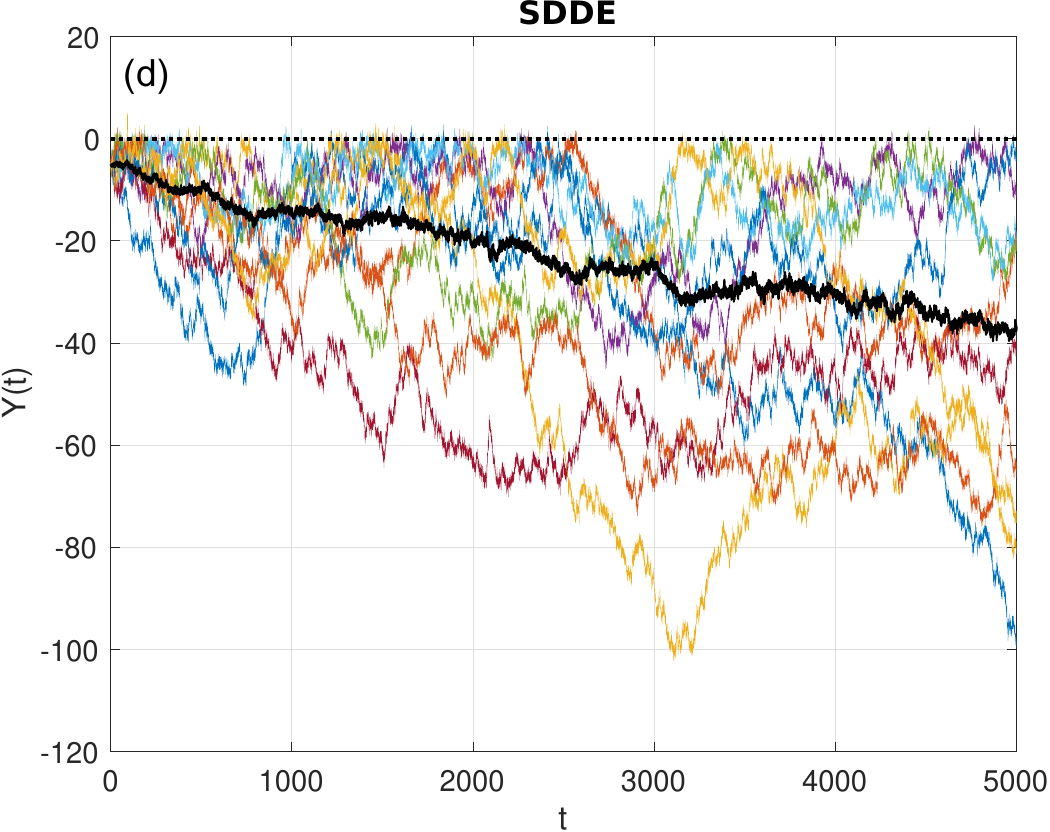}
    \caption{Simulation of  100 sample paths (only showing 20) of the solution to  equation \eqref{eq:log}  on the time interval $[0,5000]$, for the parameters $\gamma=0.5,$ $r=1,$ $n=30$, 
$f(x)=x(1+x^n)^{-1}$, as before, and (a) $\sigma=1.1$, (b) $ \sigma=1.2$, (c) $\sigma=1.3$ and (d) $\sigma=1.4.$   The black curves are the median of 100 sample paths. The (a) and (b) plots are zoomed in on the interval $[0,200]$. These plots suggest  that the solution is bounded below in probability for $\sigma\leq §1.2$. Notice that $\sigma=1$ is the threshold according to \eqref{eq:condition}. The (d) plot suggests that solutions are no longer bounded in probability for $\sigma\geq1.4.$}
    \label{fig:sigma>1}
\end{figure}


It is interesting to investigate the invariant measures obtained via the Krylov–Bogoliubov method (see Corollary \ref{cor:invariant-existence})
numerically. While these invariant measures are supported on an infinite dimensional ambient space, invariant measures with finite dimensional support can easily be observed. These arise as projections onto finite dimensional subspaces of the phase space \cite{vdBMG2026,vdBWright2026}. In Figures \ref{fig:sigma:tiny} and \ref{fig:sigma:small}, we specifically illustrate pushforwards of the invariant measure on $C([-\tau;0];(0,\infty)),$ corresponding to the projections $\varphi\mapsto \varphi(0)$ and $\varphi\mapsto(\varphi(-\tau),\varphi(0))$. These are depicted in the histogram and phase portrait, respectively. 

Theoretically, we know that solutions to equation \eqref{eq:stoch-mg-intro} with the earlier prescribed parameters are bounded in probability (Theorem \ref{thm:main}), hence induce non-trivial invariant measures (Corollary \ref{cor:invariant-existence}), for all $\sigma< 1$. Figure \ref{fig:sigma>1} demonstrates that, eventually, solutions are indeed no longer bounded away from zero in probability; solutions go extinct. Yet, this can happen far beyond the threshold $\sigma=1$|see condition \eqref{eq:condition}|and is the subject of Section \ref{sec:sharpness}.

The proof of Theorem \ref{thm:main} establishes the estimates in
Theorem~\ref{thm:negative-moment} for every prescribed $p>0$ such that
\begin{equation}
 r\kappa>\gamma+\frac{p+1}{2}\sigma^2.
\end{equation}
For $\sigma>0$, the admissible range for negative moments is therefore
\begin{equation}
 0<p<\frac{2(r\kappa-c)}{\sigma^2},
 \qquad c=\gamma+\frac12\sigma^2.
\end{equation}
For $\gamma$ and $\sigma$ fixed, these exponents become arbitrarily
large as $r\kappa\to\infty$. If $\sigma=0$, every $p>0$ is
admissible whenever $r\kappa>\gamma$. For, e.g., $\gamma=0.1, r=1, $ and $\sigma=0.1,$ we allow for $p<90$. Similar conclusions can be drawn for the positive moments, referring to Theorem \ref{thm:positive-moment}.

\subsection{Conditioning on the initial history}\label{sec:history}

In the moment estimates of Section \ref{sec:proofs}, we condition on $\mathcal F_0=\sigma(\Psi)$ to avoid imposing moment assumptions on the random initial history. It further allows us to investigate the behavior of  solutions as if the initial conditions were deterministic.
Conditional moments are moments for
the corresponding deterministic history. Lastly, conditional probability estimates give unconditional
ones simply by taking expectations \cite[Ch.~9]{Williams1991}.

In more detail, consider the natural
filtration 
\begin{equation}
 \mathcal F_t
 =\sigma(\Psi)\vee\sigma(W(s):0\le s\le t),
 \qquad
 \mathcal F_0=\sigma(\Psi),
\end{equation}
where $\sigma(\Psi)=\sigma(\Psi(\theta):-1\le\theta\le0)$.
Since $\Psi$ and $W$ are independent, conditioning on
$\mathcal F_0$ fixes the initial history and averages over
the subsequent Brownian noise. For a deterministic history, these
conditional expectations reduce to unconditional expectations.
Writing $X^\varphi$ for the solution
with deterministic history $\varphi\in C([-1,0];[0,\infty))$,  let
\begin{equation}
 u_{t,q}(\varphi):=\EE[(X^\varphi(t))^q].
 \end{equation}
 For random initial functions $\Psi$, we find the identity
 \begin{equation}
 \EE[X(t)^q\mid\mathcal F_0]
 =u_{t,q}(\Psi),
 \qquad q\in\R,
\end{equation}
almost surely; the follows from the Doob--Dynkin lemma.
Since for every fixed $\varphi$,
\begin{equation}
 \EE\sup_{0\le s\le T}(X^\varphi(s))^q<\infty,
 \qquad T<\infty,\quad q\in\R,
\end{equation}
path continuity and dominated convergence  imply continuity of
$t\mapsto u_{t,q}(\varphi)$, almost surely.

Recall the notation in Theorem~\ref{thm:negative-moment}. If $\EE [M]<\infty$, equivalently $\EE[\Psi(0)^{-p}]<\infty$, the proof
of Theorem~\ref{thm:negative-moment} can instead be written for
unconditional moments. Taking the expectation in
\eqref{eq:block-contraction} on both sides gives
\begin{equation}
 v_{n+1}\le qv_n+D,\qquad v_n:=\EE[X(nT)^{-p}].
\end{equation}
Iterating this recurrence and using \eqref{eq:voc-lower} between
successive multiples of $T$, similarly yields
\begin{equation}
  \EE[X(t)^{-p}]
 \le C+q^{\lfloor t/T\rfloor}(\EE [M]-C)_+
 \le\max\{C,\EE [M]\},\qquad t\ge0.
\end{equation}
Markov's inequality  gives boundedness away from zero in
probability. The positive-moment argument likewise admits an
unconditional formulation when the corresponding moment of $\Psi(0)$ is finite.

Notice that the initial moment assumption is unnecessary for boundedness in
probability itself, cf.   \cite[Prop. 3.5]{vdBInvariant2026} and \cite[Prop. 5.5]{vdBMG2026}. Indeed, \eqref{eq:negative-uniform} gives, for
$K\ge C$ and $\varepsilon>0$,
\begin{equation}
 \sup_{t\ge0}\PP\{X(t)\le\varepsilon\}
 \le\PP\{M>K\}+K\varepsilon^p.
\end{equation}
Since $M<\infty$ almost surely, choose $K$ large and then
$\varepsilon$ small. Conditional estimates allow us to get to the same conclusion, namely by means of the dominated convergence theorem; see the proofs of Theorems \ref{thm:negative-moment} and \ref{thm:positive-moment}.

Although the (conditional) moments generally depend on the entire history,
the bounds in Theorems~\ref{thm:negative-moment}
and~\ref{thm:positive-moment} depend on it only through $\Psi(0)$.
Indeed, nonnegativity of $f$ gives
$X(t)\ge\Psi(0)\Phi(t)$, while boundedness of $f$ controls the delayed
recruitment independently of the preceding history. Thus different
histories with the same endpoint can produce different transients while
satisfying the same moment bounds.

\subsection{Persistence versus extinction: how sharp is the threshold?}
\label{sec:sharpness}
Lyapunov spectra  for linear stochastic functional differential equations
are treated in \cite{MohammedScheutzow1996,MohammedScheutzow1997}; see also \cite{GhaniVarzanehRiedelScheutzow2022,SuZeng2026} for
more recent results. In particular, existence of the first Lyapunov exponent is known for the linearization. We will now give a detailed description of how the first Lyapunov exponent varies with respect to the parameters.
\begin{proposition}[First Lyapunov exponent]
\label{prop:linear-lyapunov}
For every $b\ge0$, the fundamental solution of
\eqref{eq:linear-fundamental} has a finite deterministic first Lyapunov
exponent $\lambda(b)$ satisfying, almost surely,
\begin{equation}
\label{eq:linear-log-growth}
 \lambda(b)=\lim_{t\to\infty}\frac{\log G_b(t)}t
           =\lim_{n\to\infty}\frac{\log G_b(n)}n
           =\sup_{n\ge1}\frac{\EE\log G_b(n)}n.
\end{equation}
The map $b\mapsto\lambda(b)$ is continuous and strictly increasing on
$[0,\infty)$, and $u\mapsto\lambda(\e^u)$ is convex. With
$c:=\gamma+\sigma^2/2$, one has
\begin{equation}
 \lambda(0)=-c,\qquad
 -c\le\lambda(b)\le-c+b\e^{\gamma+\sigma^2},\qquad b\ge0.
\end{equation}
\end{proposition}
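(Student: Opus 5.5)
The plan is to treat $\log G_b$ as a superadditive process and apply Kingman's subadditive ergodic theorem. The input is the pathwise cocycle-type inequality already used in Lemma~\ref{lem:conditional-negative-moment}:
\begin{equation*}
 G_b(t+u)\ge G_b(t)\,G_b^{(t)}(u),\qquad t,u\ge0.
\end{equation*}
This follows from Lemma~\ref{lem:linear-representation} with $g=0$, because the history term is nonnegative.

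\emph{Superadditivity and Kingman.} For integers $m<n$ we cannot use $\log G_b^{(m)}(n-m)$ alone as the doubly indexed family, since its iterates are not literally $G_b^{(m)}$ of the shifted time. We therefore define $S_{m,n}:=\log G^{[m]}(n-m)$, where $G^{[m]}$ is the fundamental solution driven by $W^{(m)}$, so $G^{[m]}=G_b^{(m)}$. The displayed inequality, applied with Brownian motion $W^{(m)}$, gives
\begin{equation*}
 S_{m,n}\ge S_{m,k}+S_{k,n},\qquad m<k<n.
\end{equation*}
The family is stationary under the shift $\theta$ of Brownian increments by one unit, and that shift is ergodic (indeed mixing). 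Integrability of $S_{0,n}$ is Lemma~\ref{lem:finite-time-log-moments}. Moreover $S_{0,n}\ge\log\Phi(n)$ gives $\EE S_{0,n}\ge -cn$, and Jensen's inequality together with the bound on $\sup G_b$ from the proof of Lemma~\ref{lem:finite-time-log-moments} gives a linear upper bound. Kingman's theorem applied to $-S$ then yields the almost-sure and $L^1$ limit
\begin{equation*}
 \lim_n \frac{\log G_b(n)}{n}=\sup_n\frac{\EE\log G_b(n)}{n}=:\lambda(b).
\end{equation*}
The limit is finite and deterministic by ergodicity.

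\emph{Passage to continuous time.} To pass from $n$ to $t\in[n,n+1]$, use \eqref{eq:voc-lower} for the fundamental solution, $G_b(t)\ge G_b(n)\Phi(t)/\Phi(n)$, and the upper bound $G_b(t)\le G_b(n+1)\Phi(t)/\Phi(n+1)$, which holds because $G_b/\Phi$ is nondecreasing. The random factors satisfy $\sup_{s\in[n,n+1]}|\log(\Phi(s)/\Phi(n))|/n\to0$ almost surely, by Borel--Cantelli using the Gaussian tails of the Brownian oscillation on unit intervals.

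\emph{Bounds and monotonicity.} Case $b=0$ gives $G_0=\Phi$, hence $\lambda(0)=-c$. Comparison gives $G_b\ge\Phi$, hence $\lambda(b)\ge-c$. For the upper bound, use Jensen: $\EE\log G_b(n)\le\log\EE G_b(n)$. The mean $\mu(t)=\EE G_b(t)$ solves the deterministic equation $\mu'=-\gamma\mu+b\mu(t-1)$, whose exponent $s$ solves $s=-\gamma+b\e^{-s}$, so $\lambda(b)\le s$. To reach the stated form $-c+b\e^{\gamma+\sigma^2}$, I would instead apply the same Jensen argument to $Z=G_b/\Phi$, i.e.\ work with $\EE[\,\cdot\,]$ of $Z(n)$ tilted suitably, or bound $\EE[\Phi(v-1)/\Phi(v)]=\e^{\gamma+\sigma^2}$ in the integral equation for $Z$ from the proof of Lemma~\ref{lem:finite-time-log-moments}. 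Gronwall-type growth of the resulting mean then gives rate at most $b\e^{\gamma+\sigma^2}$, and adding $\log\Phi(n)/n\to-c$ gives the claim. I expect the dependence between $\Phi(v-1)/\Phi(v)$ and $Z(v-1)$ to need a conditioning step, and this is where the constant must be checked carefully.

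Nondecreasing dependence on $b$ follows pathwise: $G_b/\Phi$ is increasing in $b$ by the step method, since the integral equation has a nonnegative kernel. \emph{Convexity of $u\mapsto\lambda(\e^u)$} is the main obstacle. My plan is to expand $Z$ by the step method as a power series
\begin{equation*}
 Z(n)=\sum_k b^k A_k(n),\qquad A_k\ge0,
\end{equation*}
which is a random polynomial in $b$ on bounded intervals. Then $u\mapsto\log Z(n)$ at $b=\e^u$ is log-sum-exp, hence convex. Convexity survives division by $n$ and the almost-sure limit, and continuity on $(0,\infty)$ follows from convexity. Continuity at $b=0$ follows from the two-sided bounds $-c\le\lambda(b)\le-c+b\e^{\gamma+\sigma^2}$. \emph{Strict} increase follows because a convex nondecreasing function of $u$ that is nonconstant, being $\to-c$ as $u\to-\infty$ and $\to\infty$ since $\lambda(b)\ge\log b$-type growth from $Z\ge 1+b\int\ldots$, cannot have a flat piece without being constant to its left.
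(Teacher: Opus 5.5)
Your overall approach matches the paper's: Kingman's theorem for the supermultiplicative array $\log G_b^{(m)}(n-m)$, the sandwich via monotonicity of $G_b/\Phi$, log-sum-exp convexity from the nonnegative-coefficient polynomial, and continuity at $b=0$ from the two-sided bounds. Two smaller points.

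First, the claim in your Kingman paragraph is wrong as stated: the $\sup G_b$ bound from the proof of Lemma~\ref{lem:finite-time-log-moments} does not give a linear upper bound on $\EE\log G_b(n)$. The exponent $\lceil T\rceil$ multiplies a term growing like $\log T+\sigma B_T$, so the resulting bound is superlinear. Finiteness of $\lambda(b)$ therefore has to come from your later mean argument.

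Second, that later argument via $Z=G_b/\Phi$ works, and it differs from the paper's route. The paper uses It\^o's formula for $\log G_b(n)$ together with the pathwise bound $G_b(s-1)/G_b(s)\le\Phi(s-1)/\Phi(s)$. In your route, the ``conditioning step'' you anticipate is just independence. The factor $\Phi(v-1)/\Phi(v)=\e^{c-\sigma(W(v)-W(v-1))}$ is independent of $\mathcal F_{v-1}$, which contains $Z(v-1)$. Hence $\nu:=\EE Z$ satisfies $\nu(t)=1+b\e^{\gamma+\sigma^2}\int_1^t\nu(v-1)\,dv$. Since $\nu$ is nondecreasing, Gronwall gives $\log\nu(n)\le b\e^{\gamma+\sigma^2}(n-1)$, and Jensen then yields the stated bound.

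\textbf{The genuine gap is strict monotonicity.} A convex, nondecreasing, nonconstant function of $u$ that tends to $-c$ at $-\infty$ and to $+\infty$ at $+\infty$ need not be strictly increasing. For example, $u\mapsto\max\{-c,u\}$ has all these properties. Your observation that a flat piece forces constancy to its left only shows that any flat piece must lie at level $-c$. To exclude it, you need $\lambda(b)>-c$ for \emph{every} $b>0$, and your proposal never establishes this. The bound $G_b\ge\Phi$ gives only $\lambda(b)\ge-c$, and the almost-sure limit by itself cannot produce a strict inequality.

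The missing step uses the supremum formula you already have from Kingman. For $b>0$ one has $G_b(2)/\Phi(2)=1+b\int_1^2\Phi(v-1)/\Phi(v)\,dv>1$ almost surely, so
\[
\lambda(b)\ge\tfrac12\EE\log G_b(2)>\tfrac12\EE\log\Phi(2)=-c .
\]
With this, $u\mapsto\lambda(\e^u)$ lies strictly above its limit at $-\infty$ everywhere, and your flat-piece argument then gives strict increase. This is exactly how the paper closes the argument.
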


\begin{proof}
Lemma~\ref{lem:finite-time-log-moments} gives
$\log G_b(T)\in L^1(\mathbb P)$ for every $T>0$. By
Proposition~\ref{prop:wellposed}, $G_b/\Phi$ is nondecreasing, so
\begin{equation}
 \frac{G_b(t-1)}{G_b(t)}\le\frac{\Phi(t-1)}{\Phi(t)},\qquad t\ge1.
\end{equation}
It\^o's formula gives, for $n\ge1$,
\begin{equation}
 \log G_b(n)=-cn+\sigma W(n)
       +b\int_1^n\frac{G_b(s-1)}{G_b(s)}\,ds.
\end{equation}
Since $\EE[\Phi(s-1)/\Phi(s)]=\e^{\gamma+\sigma^2}$, taking expectations
yields
\begin{equation}
 -cn\le\EE\log G_b(n)
 \le-cn+b\e^{\gamma+\sigma^2}(n-1).
\end{equation}

The representation in Lemma~\ref{lem:linear-representation} and
positivity give $G_b(t+s)\ge G_b(t)G_b^{(t)}(s)$. Hence, the array
\begin{equation}
 A_{m,n}:=-\log G_b^{(m)}(n-m),\qquad 0\le m<n,
\end{equation}
satisfies $A_{\ell,n}\le A_{\ell,m}+A_{m,n}$ for integers
$0\le\ell<m<n$. On the Wiener path space, the Brownian shift
$(\theta_kW)(t):=W(k+t)-W(k)$ satisfies
$A_{m+k,n+k}=A_{m,n}\circ\theta_k$; see, e.g., \cite[Sec.~A.3, pp.~547--548]{Arnold1998} or \cite[Example 1.1.7]{Chueshov2002}. Thus, the array is stationary under diagonal
translations of its indices: its joint distribution remains
unchanged when $(m,n)$ is replaced by $(m+k,n+k)$.
Moreover, the Brownian shift $\theta_1$ is ergodic, because the unit-length Brownian path increments
are independent and identically distributed. Moreover,
\begin{equation}
 \EE |A_{0,1}|<\infty,\qquad
 \EE [A_{0,n}]\ge\bigl(c-b\e^{\gamma+\sigma^2}\bigr)n.
\end{equation}
Kingman's subadditive ergodic theorem \cite{Kingman1973} therefore gives
a deterministic limit
\begin{equation}
 \lim_{n\to\infty}\frac{A_{0,n}}n
 =\inf_{n\ge1}\frac{\EE [A_{0,n}]}n,
 \qquad\text{almost surely}.
\end{equation}
Changing signs proves the integer-time limit and supremum formula in
\eqref{eq:linear-log-growth}; the expectation bounds above give the
stated bounds on $\lambda(b)$. For $n\le t\le n+1$, monotonicity gives
\begin{equation}
 \frac{G_b(n)}{\Phi(n)}
 \le\frac{G_b(t)}{\Phi(t)}
 \le\frac{G_b(n+1)}{\Phi(n+1)}.
\end{equation}
Since $\log\Phi(t)/t\to-c$ almost surely, this extends the limit to
continuous time.

Iterating the integrating factor equation over successive unit
intervals writes $G_b(n)/\Phi(n)$ as a polynomial in $b$ with
nonnegative coefficients. Consequently, $u\mapsto\log G_{\e^u}(n)$ is convex
and nondecreasing, and \eqref{eq:linear-log-growth} shows that
$g(u):=\lambda(\e^u)$ has the same properties. The bounds above give
$\lim_{u\to-\infty}g(u)=\lambda(0)=-c$. For $b>0$,
$G_b(2)>\Phi(2)$, so
\begin{equation}
 \lambda(b)\ge\tfrac12\EE\log G_b(2)>-c.
\end{equation}
A nondecreasing convex function lying strictly above its limit at
$-\infty$ is strictly increasing. Together with the bounds at $b=0$,
these observations prove continuity and strict monotonicity on
$[0,\infty)$.
\end{proof}

\begin{proposition}
\label{prop:sharp-persistence}
If $\lambda(r\kappa)>0$, every solution with nonnegative initial history
$\Psi$ and $\Psi(0)>0$ almost surely
is bounded away from zero in probability uniformly in time.
If, in addition, $f$ is bounded, a non-trivial invariant measure exists.
If
\begin{equation}
\label{eq:sublinearity}
 f(x)\le\kappa x\qquad(x>0),
\end{equation}
and $\lambda(r\kappa)<0$, then every such solution converges to zero
almost surely.
\end{proposition}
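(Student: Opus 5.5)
For the persistence part, I verify hypothesis (H) and then invoke Theorem~\ref{thm:negative-moment}. By Proposition~\ref{prop:linear-lyapunov}, $b\mapsto\lambda(b)$ is continuous. So $\lambda(r\kappa)>0$ lets us choose $b<r\kappa$ with $\lambda(b)>0$ (if $\kappa=\infty$, take $b$ large). Since $b<r\kappa=r\liminf_{x\to0^+}f(x)/x$, there is $\delta>0$ with $rf(x)\ge bx$ on $[0,\delta]$. The supremum formula in \eqref{eq:linear-log-growth} gives $\sup_n \EE\log G_b(n)/n=\lambda(b)>0$, hence some integer $T=n$ satisfies $\EE\log G_b(T)>0$. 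Thus (H) holds. Lemma~\ref{lem:finite-time-comparison} then provides $p>0$ with $q<1$, and Theorem~\ref{thm:negative-moment} yields boundedness away from zero in probability. If $f$ is bounded, Theorem~\ref{thm:positive-moment} gives boundedness above in probability, since some $q\ge1$ with $c>0$ exists (take $q=1$, so $c=\gamma>0$). The invariant measure then follows exactly as in the Krylov--Bogoliubov argument preceding Corollary~\ref{cor:invariant-existence}.

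For extinction under \eqref{eq:sublinearity} and $\lambda(r\kappa)<0$, I would use pathwise comparison from above with the linear equation for $b=r\kappa$. Set $g(v):=rf(X(v-1))-r\kappa\mathbf 1_{\{v\ge1\}}X(v-1)$. For $v\ge1$ this is $\le0$ by \eqref{eq:sublinearity} and $X\ge0$. For $v<1$ it equals $rf(\Psi(v-1))\le r\kappa\Psi(v-1)$. Applying Lemma~\ref{lem:linear-representation} with $b=r\kappa$ and zero history, and using $G^{(s)}\ge0$, gives
\begin{equation*}
 X(t)\le \Psi(0)G_{r\kappa}(t)+r\kappa\int_0^{1}G_{r\kappa}^{(s)}(t-s)\Psi(s-1)\,ds .
\end{equation*}
The first term decays a.s.\ at rate $\lambda(r\kappa)<0$ by \eqref{eq:linear-log-growth}. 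For the integral, monotonicity of $G^{(s)}/\Phi^{(s)}$ (Proposition~\ref{prop:wellposed}) and the representation $G(t)\ge G(s)\,G^{(s)}(t-s)$ let us bound $G^{(s)}(t-s)\le G^{(1)}(t-1)\cdot \Phi^{(s)}(1-s)^{-1}$-type factors. More precisely, $G^{(s)}(t-s)\le \sup_{0\le s\le1}\bigl(\Phi(1)/\Phi(s)\bigr)\cdot$ (a solution started at time $1$ with history bounded by $1$). By Lemma~\ref{lem:linear-representation} started at time $1$, that solution is dominated by a constant multiple of $G^{(1)}_{r\kappa}(t-1)+\int_0^1 G^{(1+s)}(t-1-s)\,ds$. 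This reduces matters to showing that $\sup_{0\le s\le1}G^{(s)}_{r\kappa}(t-s)$ decays exponentially almost surely.

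\textbf{Main obstacle.} The hard step is this uniform-in-$s$ decay of $G^{(s)}(t-s)$ over $s\in[0,1]$, because Kingman's theorem only gives the limit along a fixed starting point. I would first try the following. Since $G^{(s)}(u)/\Phi^{(s)}(u)$ is nondecreasing and the iterated integrating-factor expression is monotone in the delay ratios, one gets $\sup_{s\in[0,1]}G^{(s)}(t-s)\le K\,\widetilde G(t)$. Here $K$ is an a.s.\ finite random constant built from $\sup_{0\le u\le 2}|W(u)|$, and $\widetilde G$ solves the linear equation from time $1$ with history $\equiv1$ on $[0,1]$. That equation again has Lyapunov exponent $\lambda(r\kappa)$: write it via Lemma~\ref{lem:linear-representation} as $G^{(1)}$ plus an integral, and apply the subadditive bound to integer starting points. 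Alternatively, one can take $T$ with $\EE\log G_{r\kappa}(T)<0$, get a conditional moment $\EE[G^{(s)}(T)^{p}]<1$ for small $p$ (the positive-moment analogue of Lemma~\ref{lem:finite-time-log-moments}), and conclude with Borel--Cantelli via a supermartingale bound on the segment norm $\sup_{\theta}X(t+\theta)$. With that in hand, $X(t)\to0$ a.s.
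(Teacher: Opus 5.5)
Your persistence argument is correct and essentially the paper's. The only difference is the order of choices: you obtain $b<r\kappa$ from continuity of $\lambda$ and then an integer $T$ from the supremum formula in \eqref{eq:linear-log-growth}, while the paper fixes $T$ first and uses continuity of $b\mapsto\EE\log G_b(T)$.

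The extinction part, however, has a genuine gap: the step you flag as the main obstacle is never closed. Your first displayed bound on $X(t)$ is correct, but your attempts to control $\int_0^1G^{(s)}_{r\kappa}(t-s)\Psi(s-1)\,ds$ go in a circle.
\begin{itemize}
\item The inequality $G^{(s)}(t-s)\le G^{(1)}(t-1)\,\Phi^{(s)}(1-s)^{-1}$ points the wrong way. Splitting at time $1$ gives $G^{(s)}(t-s)\ge\Phi^{(s)}(1-s)\,G^{(1)}(t-1)$.
\item The domination $\sup_{0\le s\le1}G^{(s)}(t-s)\le K\widetilde G(t)$ is correct. But writing $\widetilde G(1+u)=G^{(1)}(u)+r\kappa\int_0^{\min\{u,1\}}G^{(1+s)}(u-s)\,ds$ reproduces the same kind of integral over non-integer starting times. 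Kingman's theorem along integer starting points says nothing about it, so the claim that $\widetilde G$ has exponent $\lambda(r\kappa)$ is unsupported.
\item The Borel--Cantelli alternative is only a sketch and meets the same difficulty. A segment-to-segment contraction again requires dominating solutions launched from every time in a unit interval.
\end{itemize}

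The missing observation is that no uniform-in-$s$ Lyapunov statement is needed. All $G^{(s)}_{r\kappa}$ with $0\le s\le1$ are dominated by the single process $G_{r\kappa}$. Read the supermultiplicativity $G_{r\kappa}(t)\ge G_{r\kappa}(s)G^{(s)}_{r\kappa}(t-s)$ as an upper bound and use $G_{r\kappa}=\Phi>0$ on $[0,1]$:
\[
 G^{(s)}_{r\kappa}(t-s)\le\frac{G_{r\kappa}(t)}{\Phi(s)},\qquad 0\le s\le1\le t.
\]
Hence, for $t\ge1$,
\[
 X(t)\le G_{r\kappa}(t)\Bigl[\Psi(0)+r\kappa\int_0^1\Psi(s-1)\Phi(s)^{-1}\,ds\Bigr],
\]
and the bracket is almost surely finite. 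The continuous-time limit in \eqref{eq:linear-log-growth} then gives $\limsup_{t\to\infty}t^{-1}\log X(t)\le\lambda(r\kappa)<0$.

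The paper does the same thing in comparison form. Set $C:=\max_{0\le s\le1}X(s)/G_{r\kappa}(s)<\infty$. The process $CG_{r\kappa}-X$ solves the linear equation from time $1$ with nonnegative history on $[0,1]$ and nonnegative forcing $r\kappa X(t-1)-rf(X(t-1))$. Lemma~\ref{lem:linear-representation} then gives $0<X\le CG_{r\kappa}$ on $[1,\infty)$.
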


\begin{proof}
If $\lambda(r\kappa)>0$, \eqref{eq:linear-log-growth} gives $T$ with
$\EE\log G_{r\kappa}(T)>0$. Finite-time continuity permits
$b<r\kappa$ with the same property, so
Theorem~\ref{thm:negative-moment} gives persistence. When $f$ is bounded, one obtains the invariant measure similarly to the discussion prior to
Corollary~\ref{cor:invariant-existence}.

Under \eqref{eq:sublinearity}, put $b=r\kappa$. By
Proposition~\ref{prop:wellposed},
$C:=\max_{0\le s\le1}X(s)/G_b(s)<\infty$ almost surely.
Applying Lemma~\ref{lem:linear-representation} from time $1$ to
$CG_b-X$, whose history and forcing
$bX(t-1)-rf(X(t-1))$ are nonnegative, gives
$0<X(t)\le CG_b(t)$ for $t\ge1$.
Proposition~\ref{prop:linear-lyapunov} gives the continuous-time limit
in \eqref{eq:linear-log-growth}.
Consequently,
\begin{equation}
 \limsup_{t\to\infty}\frac{\log X(t)}t
 \le\lambda(r\kappa)<0
 \qquad\text{almost surely},
\end{equation}
which proves extinction.
\end{proof}

For feedbacks satisfying \eqref{eq:sublinearity} with $\kappa>0$,
including Mackey--Glass and Nicholson feedbacks with $\kappa=f'(0)=1$,
Proposition~\ref{prop:sharp-persistence} identifies the nonlinear
threshold with that of the linear comparison equation. We thus have a unique critical rate satisfying
\begin{equation}
 \lambda(r_{\mathrm c}\kappa)=0,
 \qquad 0<r_{\mathrm c}\le\frac{\gamma+\sigma^2/2}{\kappa}.
\end{equation}
Persistence holds for $r>r_{\mathrm c}$, and almost-sure extinction for
$r<r_{\mathrm c}$.


\begin{acknowledgments}
Both AI tools
Claude (Anthropic) and ChatGPT (OpenAI) have been used during the preparation of this
manuscript. The authors have reviewed all AI-assisted content and take full responsibility
for the final manuscript. 
\end{acknowledgments}
\hypersetup{linkcolor=black,citecolor=black,urlcolor=black}

\end{document}